\documentclass{amsart}
\usepackage[utf8]{inputenc}
\usepackage{amsfonts}
\usepackage{amsmath}
\usepackage{amssymb}
\usepackage{amsthm}
\usepackage{colonequals}
\usepackage{xcolor}
\usepackage{bm}
\usepackage{mathtools}
\usepackage{enumitem}
\usepackage{verbatim}
\usepackage{pinlabel}
\usepackage{caption}
\usepackage{subcaption}
\usepackage{tikz}
\usepackage{graphicx}
\usepackage{hyperref}
\usepackage{tikz-cd}
\usepackage{biblatex}
\usepackage{float}
\usepackage{geometry}
\usepackage{soul}
\usepackage{mathtools}

\definecolor{purp2}{RGB}{176,0,255}
\definecolor{purp1}{RGB}{147,0,150}

\newcounter{COMMENTS}
\newcommand{\newComment}[3]{
    \expandafter\newcommand\csname #1\endcsname[1]{ %Defines \Firstname{...comment...}
        \textbf{%
		\color{#3}(\uppercase{#2}\theCOMMENTS)%
        }
        \marginpar{\scriptsize\raggedright\textbf{%
			{\color{#3}(\uppercase{#2}\theCOMMENTS)#1: 
	}} ##1}
	    \stepcounter{COMMENTS}
    }
    \expandafter\newcommand\csname #2\endcsname[1]{{\color{#3} ##1}} %Defines \Initials{...words...}
}

\newComment{Ryan}{rd}{orange}
\newComment{Roberta}{rs}{blue}

\newtheorem{theorem}{Theorem}[section]

\newtheorem{lemma}[theorem]{Lemma}
\newtheorem{proposition}[theorem]{Proposition}

\theoremstyle{definition}

\newtheoremstyle{claim}% name
  {\topsep}% space above
  {\topsep}% space below
  {}% body font
  {}% indent amount
  {\itshape}% theorem head font
  {.}% punctuation after theorem head
  {.5em}% space after theorem head
  {\thmname{#1}\thmnumber{ #2}\thmnote{ (#3)}}% theorem head spec
\theoremstyle{claim}

\DeclareMathOperator{\Diffeo}{Diff}

\DeclareMathOperator{\link}{link}

\DeclareMathOperator{\Star}{star}

\newcommand{\E}{\mathcal{E}}

\newcommand{\fine}{\mathcal{C}^\dagger}
\newcommand{\finearc}{\mathcal{A}^\dagger}

\newcommand{\surf}{S_{g,b}}

\newcommand{\cc}{\mathcal{C}}

\newcommand{\p}[1]{\medskip\noindent\textbf{#1}\textbf{.}}

\newcommand{\pit}[1]{\medskip\noindent\textit{#1}\textit{.}}

\usepackage{tcolorbox} 
\newlist{todolist}{itemize}{2}
\setlist[todolist]{label=$\square$}
\usepackage{pifont}

\title{Homotopy types of fine curve and fine arc complexes}
\author{Ryan Dickmann, Zachary Himes, Alexander Nolte, and Roberta Shapiro}
\date{}

\begin{document}

\begin{abstract}
    The fine curve complex of a surface is a simplicial complex whose vertices are essential simple closed curves and whose $k$-simplices are collections of $k+1$ disjoint curves. We prove that the fine curve complex is homotopy equivalent to the curve complex. We also prove that the fine arc complex is contractible.
\end{abstract}
\maketitle

\section{Introduction}

The \emph{fine curve complex} of the compact surface $S_{g,b}$ with genus $g$ and $b$ boundary components, denoted $\fine(S_{g,b}),$ is the simplicial complex whose vertices are essential nonperipheral simple closed curves in $S_{g,b}$ and $k$-simplices are sets of $k+1$ disjoint curves. The \emph{fine curve graph}, first introduced in the literature by Bowden--Hensel--Webb \cite{BHW}, is the 1-skeleton of the fine curve complex. Bowden--Hensel--Webb studied the fine curve complex and used the full subgraph of the fine curve graph generated by smooth curves to show that $\Diffeo_0(\surf)$ admits unbounded quasimorphisms. This result is in contrast to results on  $\Diffeo_0(M)$ for closed smooth manifolds $M$ with $\mathrm{dim}(M) \neq 2,4$ (see \textcite{burago2008conjugation,tsuboi2008uniform,tsuboi2012uniform}). %added a parathetical because the citations close to the numbers looked weird to me. - Ryan 
A major goal of the study of fine curve graphs is to study classical groups of homeomorphisms with geometric group theoretic tools, as in this result of Bowden--Hensel--Webb.

The fine curve complex differs from the better-studied simplicial complex called the \emph{curve complex}, denoted $\mathcal{C}(\surf)$ (e.g. \textcite{Ivanov,MM}), in that the latter takes as its vertices \emph{isotopy classes} of essential simple closed curves and $k$-simplices are collections of $k+1$ isotopy classes that admit pairwise disjoint representatives. The curve complex of $S_{g, b}$ is a simplicial complex of dimension  $3g+b-3$, whereas the fine curve complex is an infinite dimensional simplicial complex. 
%Homotopy types of curve complexes play a central role in the use of curve complexes to study the mapping class groups of surfaces. Removed because there is a similar sentence later - Ryan

\textcite[Theorem 3.5]{MR830043} and \textcite[Lemma 6.5 and Theorem 6.6]{Ivanov2} showed that the geometric realization of $\cc(\surf)$ is homotopy equivalent to a wedge of $(2g+b-3)$-spheres if $b\neq 0$ or a wedge of $(2g-2)$-spheres if $b=0$.
Homotopy types of curve complexes play a central role in the use of curve complexes to study mapping class groups of surfaces (see, for example, \textcite{HarerHom} and \textcite{MR830043}).

%Throughout, we conflate simplicial complexes with their geometric realizations. 
In Theorem~\ref{thm:cchom}, we relate the homotopy type of the fine curve complex to that of the curve complex. 

\begin{theorem}\label{thm:cchom}
    Let $\surf$ be an orientable surface with $g\geq 1$ or $b\geq 4$. The map $f:\fine(\surf)\to \cc(\surf)$ that sends a curve to its isotopy class is a homotopy equivalence. 
    
    In particular, when $(g, b)\neq(0, b)$ with $b\leq 3$, $\fine(\surf)$ is homotopy equivalent to a wedge of $(2g+b-3)$-spheres if $b\neq 0$ and it is homotopy equivalent to a wedge of $(2g-2)$-spheres if $b=0.$
\end{theorem}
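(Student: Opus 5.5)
The plan is to apply Quillen's fiber lemma to the simplicial map $f$. First, $f$ is simplicial. Disjoint curves are either non-isotopic, in which case they span a simplex of $\cc(\surf)$, or isotopic, in which case they have the same image. So a simplex of $\fine(\surf)$ maps onto a face of a simplex of $\cc(\surf)$. I would pass to posets of simplices, $P=\mathcal{P}(\fine(\surf))$ and $Q=\mathcal{P}(\cc(\surf))$, ordered by inclusion. The induced poset map $F:P\to Q$ sends a simplex to its set of isotopy classes. By Quillen's Theorem A it suffices to show that every fiber $F^{-1}(Q_{\le\sigma})$ is contractible. Here $\sigma=\{c_0,\dots,c_k\}$ is a simplex of $\cc(\surf)$, and the fiber is the subcomplex $\fine_\sigma\subset\fine(\surf)$ spanned by pairwise disjoint curves whose isotopy classes all lie in $\sigma$. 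The identification of the spheres then follows from the Harer/Ivanov computation quoted above.

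To show $\fine_\sigma$ is contractible, I would use the following standard fact. If every finite subcomplex $K\subset\fine_\sigma$ is contained in the star of some vertex, or more generally in a cone inside $\fine_\sigma$, then all homotopy groups vanish, by compactness of spheres. Whitehead's theorem then gives contractibility, since $\fine_\sigma$ is a CW complex. So let $K$ be finite, with vertex set a finite set $V$ of curves representing classes in $\sigma$. The goal is a curve $\gamma$ in the class $c_0$, say, that is disjoint from every curve in $V$. Then every simplex of $K$ together with $\gamma$ is still a simplex of $\fine_\sigma$: its curves are pairwise disjoint, and $\gamma$ is either disjoint from or equal to each of them. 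Thus $K\subset \Star(\gamma)$, which is contractible.

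Constructing $\gamma$ is the main obstacle. Curves in $V$ that represent different classes need not be disjoint. Only curves within a single simplex of $K$ are disjoint, and the topological curves may intersect in wild ways, for instance in Cantor sets or infinitely often. Choosing $\gamma$ parallel to one curve may hit others. My first attempt is to work class by class. Among the finitely many curves in $V$ representing $c_0$, together with all other curves in $V$, I would look for a thin annulus neighborhood of some representative of $c_0$ that avoids everything. This cannot be done directly, so instead I would replace the target: it suffices to find, for each finite $K$, a homotopy inside $\fine_\sigma$ from $K$'s inclusion into a cone. One can hope to do this by an inductive coning argument: cone off in order of the classes $c_0,\dots,c_k$. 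The available tool is the following. Any finite collection of curves in a fixed isotopy class $c$ is contained in a closed region, and since the other classes $c_j$ are disjoint from $c$, curves representing them can be pushed off. Concretely, in the complement of the finite union of the curves of $V$, I would find an essential simple closed curve in class $c_0$. Since $V$ is a finite union of curves, its complement is open, and the question is whether some component contains a curve isotopic to $c_0$. I expect that this can fail, because representatives of $c_0$ can fill a neighborhood. That is exactly where the argument must be modified. The fix I would try is to link posets differently: use the fiber over $Q_{\ge\sigma}$, the dual version of Quillen's lemma, or perform a sequence of "surgery" homotopies that replace a vertex by a disjoint nearby representative. In each such surgery I would use the Schoenflies and annulus theorems to control the topology of the intersections.

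The hypothesis $g\ge1$ or $b\ge4$ is exactly what guarantees that $\cc(\surf)$ is nonempty and that disjoint curves in distinct classes exist, so that the complex is the usual one. I would check the low-complexity cases (the torus and $S_{1,1}$, where edges of $\cc$ are usually defined differently, and here disjoint curves are simply isotopic) separately. In those cases $\cc(\surf)$ is discrete and each fiber is a single isotopy class's complex, handled by the same coning argument.
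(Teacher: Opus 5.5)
Your reduction is sound: $f$ is simplicial, Quillen's fiber lemma on face posets is equivalent to the Hatcher--Vogtmann criterion the paper uses, and the sphere count comes from the Harer/Ivanov computation. But the whole content of the theorem is that each fiber $f^{-1}(\sigma)$ is contractible, and you do not prove this.

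Your main plan fails, as you suspected, already on the torus. That plan was to find one representative of $c_0$ disjoint from every vertex of a finite subcomplex $K$. On $T^2=\mathbb{R}^2/\mathbb{Z}^2$, let $a=\{y=0\}$ and let $b$ be the image of the graph $y=2\sin(2\pi x)$. These curves are isotopic, yet every component of $T^2\setminus(a\cup b)$ is a disk. So no essential curve misses both, and the two-vertex subcomplex $\{a,b\}$ lies in no star. Your claim that the torus and $S_{1,1}$ are ``handled by the same coning argument'' is therefore false.

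The repairs you propose are not yet arguments. The upper fibers $F^{-1}(Q_{\ge\sigma})$ also involve curves in the classes of $\link(\sigma)$, and they face the same wild intersections. ``Surgery homotopies controlled by Schoenflies and the annulus theorem'' does not say what is surgered, why the result is still a simplicial map into the fiber, or what quantity decreases.

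The missing idea is to homotope a simplicial sphere $\phi$ by replacing its vertices, rather than coning off $K$ as it stands. The paper does this in two stages.

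\emph{First stage: tame the intersections} (Lemmas~\ref{lemma:nicesphererepcurves} and~\ref{lemma:nicesphere}). Replace the vertex curves $b_1,\dots,b_n$ one at a time by curves $c_i$ in the same classes, such that:
\begin{itemize}
\item $c_i\cap b_i=\emptyset$ (or $c_i=b_i$);
\item $c_i$ misses every $b_j$ and $c_j$ for which $b_i\cap b_j=\emptyset$;
\item every $c_i\cap c_j$ is finite and crossing.
\end{itemize}
The disjointness conditions are exactly what make $\{b_{v_0},c_{v_0},\dots,b_{v_p},c_{v_p}\}$ a simplex whenever $\{b_{v_0},\dots,b_{v_p}\}$ is. So the straight-line homotopy in barycentric coordinates is defined.

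Producing the $c_i$ is the real topological input (Proposition~\ref{lemma:keylemma}). The set $\E$ of boundary points of the pairwise intersection sets is compact and zero-dimensional. By a planar separation theorem, its complement in a thin annulus beside $b_i$ is path-connected and contains a curve isotopic to the core. In a small annulus around that curve, tameness of planar arcs then gives finitely many crossing intersections.

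\emph{Second stage: fix $c_n$ and remove bigons} (Lemma~\ref{lemma:spheretostar}). Take an innermost bigon between some $c_i$ and $c_n$, push a parallel copy of $c_i$ into it, and surger the copy across $c_n$.
\begin{itemize}
\item Innermostness ensures the new curve misses everything $c_i$ missed, so again a linear homotopy applies.
\item $|c_i\cap c_n|$ drops by $2$.
\item Because all classes lie in $\sigma$, the bigon criterion supplies a bigon as long as any intersection with $c_n$ remains.
\end{itemize}
This ends with the sphere in $\Star(c_n)$, and Whitehead's theorem finishes.

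The second stage makes no sense without the first. A priori the curves may meet in Cantor sets or at accumulating intersection points, where bigons are not even defined.
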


We require $g\geq 1$ or $b\geq 4$ because when $g=0$ and  $b\leq 3$, both $\fine(\surf)$ and $\cc(\surf)$ are empty in these cases, so the theorem is vacuously true.
We note that $(g,b)=(1,0),(1,1),(0,4)$ are odd cases since some versions of (fine) curve complexes allow for curves that intersect at one point (the first two cases) or two points (the last case) to form a simplex.
%be in the same simplices. 
In this paper, we will only consider simplices defined by disjointness. For an analogous result in the setting of 3-manifolds, see \textcite[Proposition 2.3]{MR4898578}.

Alongside the fine curve complex, we have the \emph{fine arc complex.} An \emph{essential arc} is (the image of) a proper embedding $[0,1]\hookrightarrow \surf$ such that the image does not cobound a disk with $\partial \surf.$ The fine arc complex of a surface $\surf$, denoted $\finearc(\surf),$ is the simplicial complex whose $k$-simplices are sets of $k+1$ disjoint essential arcs. The arc complex is the simplicial complex whose $k$-simplices are sets of $k+1$ isotopy classes of properly embedded isotopy classes of essential arcs. \textcite{HarerHom} and \textcite{Hatcher} previously showed that the arc complex is contractible for most orientable surfaces without punctures. In this paper, we prove an analogous result for the fine arc complex.

\begin{theorem}\label{thm:achom}
    Let $\surf$ be an orientable surface with $b\geq 1$ and $(g,b)\neq (0,1), (0,2), (0,3), (0,4), (1,1).$ Then the fine arc complex $\finearc(\surf)$ is contractible.
\end{theorem}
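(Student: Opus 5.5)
We plan to deduce Theorem~\ref{thm:achom} from the contractibility of the (isotopy-class) arc complex $\mathcal{A}(\surf)$, due to Harer and Hatcher, by the same strategy used for Theorem~\ref{thm:cchom}. We show that the forgetful map $f:\finearc(\surf)\to\mathcal{A}(\surf)$, sending an arc to its isotopy class, is a homotopy equivalence. We use isotopy rel $\partial$-setwise, so endpoints may slide along boundary components. The excluded surfaces are exactly those where $\mathcal{A}(\surf)$ fails to be contractible or is degenerate.

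The main tool is Quillen's fiber lemma, or a simplicial variant of it. Disjoint essential arcs are pairwise non-isotopic unless parallel, so we first organize fibers as follows. For a simplex $\sigma=\{[a_0],\dots,[a_k]\}$ of $\mathcal{A}(\surf)$, consider the full subcomplex $F_\sigma\subset\finearc(\surf)$ spanned by arcs whose classes lie in $\sigma$. We must show $F_\sigma$ is contractible.

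To do this, use the cone-off trick. Fix a reference realization of $\sigma$ by disjoint arcs $b_0,\dots,b_k$. Given any finite subcomplex $K\subset F_\sigma$, it involves finitely many arcs. Choose one arc $c_i$ in each class $[a_i]$, with the $c_i$ pairwise disjoint and disjoint from every arc appearing in $K$. This is possible because arcs in a common class, when pairwise disjoint, are parallel and cobound strips (rectangles). Given finitely many representatives of $[a_i]$ occurring in $K$, push a new representative into a thin collar region, for instance a parallel copy lying outside all the strips, near the boundary of a regular neighborhood of their union. Then every simplex of $K$ together with $\{c_0,\dots,c_k\}$ spans a simplex of $F_\sigma$, so $K$ lies in a cone and is null-homotopic in $F_\sigma$. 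By Whitehead's theorem, $F_\sigma$ is contractible.

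Since $\mathcal{A}(\surf)$ is contractible under our hypotheses, it remains to promote contractibility of the fibers to a homotopy equivalence. Use the poset version of the fiber lemma: pass to face posets and consider the poset map $P(\finearc)\to P(\mathcal{A})$, sending a simplex to its set of classes. Quillen's Theorem A requires the fibers $f^{-1}(P_{\le\sigma})$ to be contractible. These fibers are precisely the face posets of the simplices of $\finearc$ whose classes lie in $\sigma$, which is the face poset of $F_\sigma$. The step needing care is that $f$ genuinely lands in $\mathcal{A}$: disjoint essential arcs have disjoint representatives (immediate) and give distinct-or-equal classes, and a simplex may map onto a lower-dimensional face. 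That is fine for posets.

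The main obstacle is the disjoint-representative step, since arcs in $K$ are merely topological (possibly wild) embeddings. Two arcs in the same class that intersect infinitely often must both be avoided by $c_i$. Since $K$ is a finite complex, each arc in it is disjoint from the other arcs of its own simplices, but arcs in different simplices of $K$ may intersect wildly. The argument therefore needs a representative disjoint from finitely many arbitrary arcs. First try this: the union $U$ of the finitely many arcs in $K$ is a compact set meeting $\partial\surf$ in finitely many points. Choose an endpoint region on $\partial\surf$ avoiding those points. Then find $c_i$ inside a small neighborhood of $\partial\surf\cup(\text{something in the class})$ avoiding $U$. If this is not directly possible, replace each class's path by one running close to the boundary of a component of $\surf\setminus U$, using that $\surf\setminus U$ has finitely many components containing nonperipheral regions, and invoke a Schoenflies-type argument as presumably done for curves in the proof of Theorem~\ref{thm:cchom}.
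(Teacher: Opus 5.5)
\textbf{The gap is in the fiber step, and it is not the wildness issue you flag.} The cone-off trick needs representatives $c_i\in[a_i]$ disjoint from every arc of $K$. Such representatives need not exist, even when the arcs of $K$ meet in finitely many transverse points.

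Here is a concrete example. Let $a$ be an essential arc with an endpoint on a boundary component $\partial_1$, and fix a collar $C\cong S^1\times[0,1]$ of $\partial_1$ that meets $a$ in a radial segment. Let $a'$ be a parallel copy of $a$ whose piece in $C$ is replaced by a spiral winding twice around $C$.
\begin{itemize}
\item Under your sliding-endpoint convention, $[a']=[a]$: rotate $\partial_1$ back twice. Also $|a\cap a'|=2$.
\item Cut $C$ along $a$ to get a rectangle. The middle strand of $a'$ runs from one side of this rectangle to the other, so it separates $\partial_1$ from the inner boundary circle of $C$.
\item An essential arc with an endpoint on $\partial_1$ cannot stay inside $C$. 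Hence every such arc meets $a\cup a'$.
\end{itemize}
So the $0$-sphere $\{a,a'\}\subset F_{\{[a]\}}$ cannot be coned off by any $c\in[a]$. The fallbacks in your last paragraph (running $c_i$ near $\partial\surf$, or along the frontier of a component of $\surf\setminus U$) fail for the same reason. The same obstruction occurs for curves, for instance on a torus. That is why the paper proves Theorem~\ref{thm:singleisotopyclasscontractible} by surgery rather than by coning; nothing like a common disjoint representative is produced in Section~\ref{sec:fcc}.

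\textbf{What is missing} is a way to push a sphere into the star of a single vertex without a common disjoint representative. For curves, the paper proceeds in two steps:
\begin{itemize}
\item it perturbs the vertices so they meet finitely often, at crossings only (Proposition~\ref{lemma:keylemma} and Lemma~\ref{lemma:nicesphererepcurves});
\item it then surgers them off one of them, $c_n$, along innermost bigons (Lemma~\ref{lemma:spheretostar}).
\end{itemize}
An arc version would also have to remove half-bigons cut off by $\partial\surf$.

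The paper's proof of Theorem~\ref{thm:achom} bypasses $\mathcal{A}(\surf)$ altogether. It uses Proposition~\ref{lemma:keylemma} to choose an auxiliary arc $\beta$ that crosses each $\gamma_i$ finitely often, with $\partial\beta$ disjoint from the $\gamma_i$. It fixes in advance the surgeries of the $\gamma_i$ along $\beta$, starting from the intersection nearest the head of $\beta$. It then Hatcher-flows the sphere into $\Star(\beta)$.

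Your outer framework (Proposition~\ref{prop: contractible fib} plus contractibility of $\mathcal{A}(\surf)$) is a legitimate alternative route, as the paper itself remarks. It works only once the fibers $F_\sigma$ are shown to be contractible by such a surgery argument rather than by coning.
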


\p{Homotopy types of curve, arc, and finitary curve complexes} \textcite{HarerHom} uses the high connectivity of the curve complex to show that the homology of mapping class groups of surfaces stabilizes. Hatcher's more general theorem in \cite{Hatcher} allows for Harer's results to be extended to more types of vertex sets and larger complexes. 

More recently, the fourth named author showed that the \emph{finitary curve complex}, the simplicial complex whose vertices are the same as that of the fine curve complex but $k$-simplices are collections of $k+1$ curves that pairwise intersect in finitely many points, is contractible (see \textcite{shapiro}). 
%This is the first result on the topology of fine curve complexes.

\p{Outline of paper} In Section~\ref{sec:background}, we introduce some nomenclature surrounding fine curve graphs and prove a key proposition. In Section~\ref{sec:fcc}, we prove Theorem~\ref{thm:cchom}. Along the way, we prove Theorem~\ref{thm:singleisotopyclasscontractible}, which states that the fibers over simplices of the map $f$ from Theorem~\ref{thm:cchom} are contractible. Finally, we prove that the fine arc complex is contractible (Theorem~\ref{thm:achom}) in Section~\ref{sec:fac}.

\p{Acknowledgments} RS thanks Dan Margalit, Daniel Minahan, and Mladen Bestvina for many discussions. ZH thanks Jeremy Miller, Sam Nariman, Peter Patzt, and Jennifer Wilson for helpful conversations.
AN was supported by the National Science Foundation under Grants No. 1842494, 2502952, and 2005551. The authors further thank Jes\'us Hern\'andez Hern\'andez and Yusen Long for comments on a draft.

\section{Key definitions and a critical proposition}\label{sec:background}
The main goal of this section is to prove Proposition~\ref{lemma:keylemma}, which states that given any finite collection of curves or arcs, there is a curve (resp. an arc) arbitrarily close to a given curve (resp. arc) and that intersects every element of the collection only finitely many times at crossing intersections (defined below).
Classical but major results in surface topology play a major role in working with fine curve complexes, and this is the place where such results enter this paper. 
%contains the input from surface topology that we use below.
Proposition~\ref{lemma:keylemma} consolidates the results of Lemma 4.1 and Corollary 4.2 in \textcite{shapiro}. We include a bare-bones proof with pointers to corresponding results and lemmas in \cite{shapiro} as well for completeness.

Throughout the paper, we adopt the convention that all curves and arcs are simple, i.e. have no self-intersections. Whenever we refer to an arc, we will implicitly mean an essential arc. Similarly, whenever we refer to a curve in a surface, we will implicitly assume that the curve is essential (meaning that it is not nullhomotopic) and nonperipheral (meaning that it is not homotopic to a boundary component of the surface).

We first introduce some nomenclature for points of $u\cap v$ that are not accumulation points for the set $u\cap v$.

   Two curves $u$ and $v$ are \emph{touching} at a point $c\in u\cap v$ if, in an arbitrarily small neighborhood $N$ of $c,$ $u$ and $v$ can be isotoped to be disjoint. Otherwise, $u$ and $v$ are \emph{crossing} at $c.$  
Examples of such intersections can be found on the right and left, respectively, of Figure~\ref{fig:crossingvstouching}. 

\begin{figure}[h]
\begin{center}
\includegraphics[width=3in]{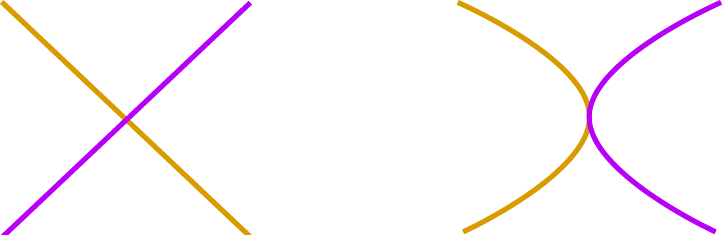}
\caption{Left: a crossing intersection. Right: a touching intersection.}\label{fig:crossingvstouching}
\end{center}
\end{figure}

\begin{proposition}\label{lemma:keylemma}
    Let $S=\surf$ be an orientable surface and let $\gamma_1,\ldots,\gamma_n$ be a collection of curves or arcs in $S.$ Let $y$ be a curve (resp. an arc) in $S.$ Then there exists a curve (resp. an arc) $\gamma$ in $S$ such that:
    \begin{enumerate}
        \item $|\gamma\cap \gamma_i| < \infty$ for all $i,$
        \item all intersections $\gamma\cap \gamma_i$ for all $i$ are crossing, and
        \item $\gamma$ is contained in some arbitrary pre-selected open neighborhood of $y$.
    \end{enumerate}
\end{proposition}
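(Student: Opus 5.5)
The plan is to reduce to a smooth, transverse situation using classical approximation results: the Schoenflies theorem and the fact that any finite family of topological curves/arcs can be straightened locally, namely Epstein's results on curves in surfaces and the existence of smooth structures adapted to a single curve. The difficulty is that the $\gamma_i$ are arbitrary topological curves, possibly wild relative to one another, so we cannot straighten all of them at once. Instead, the curve $\gamma$ will be built as a concatenation of short straight pieces in local charts, chosen generically so that it meets each $\gamma_i$ only finitely often and only at crossings.

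First fix the open neighborhood $U$ of $y$. Since $y$ is a tame simple closed curve (resp. arc), the Schoenflies theorem gives a closed annulus (resp. a closed disk meeting $\partial S$ in two arcs) $A\subset U$ with $y$ as its core. Any curve (resp. arc) in $A$ isotopic to the core is isotopic to $y$, so it suffices to work inside $A\cong S^1\times[-1,1]$ (resp. $[0,1]\times[-1,1]$).

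The core step, following Lemma 4.1 of \cite{shapiro}, is to find a core curve of $A$ meeting each $\gamma_i$ finitely often and only crossingly. Cover $A$ by finitely many small coordinate squares $Q_1,\dots,Q_m$ arranged in a chain along the core. In each square consider the family of vertical (or horizontal) segments $\{t\}\times[-1,1]$. The set $\gamma_i\cap Q_j$ is compact, and the portion of $\gamma_i$ lying in a slightly smaller square is covered by finitely many subarcs. The idea is to choose the straight pieces of $\gamma$ so that they meet each subarc of $\gamma_i$ in a controlled way. One approach is to replace the straight segments by small polygonal paths avoiding $\gamma_i$ except at finitely many points. A cleaner route is to apply Schoenflies locally, one $\gamma_i$ at a time. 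Near a point $p$ of $\gamma_i$ there is a chart in which $\gamma_i$ is a straight line. Since $\gamma$ only needs to meet each $\gamma_i$ at finitely many points, we route $\gamma$ in the complement $A\setminus\bigcup\gamma_i$ wherever possible. We cross $\gamma_i$ only at chosen points $p$, where the crossing is made straight in a local Schoenflies chart for $\gamma_i$ that is small enough to avoid all other $\gamma_k$ not passing through $p$. If several $\gamma_i$ pass through $p$, we choose a different $p$; we claim such points exist densely on $\gamma_i$ after perturbing, using that $\gamma_i\setminus\bigcup_{k\ne i}\gamma_k$ is relatively open and nonempty in $\gamma_i$ unless $\gamma_i=\gamma_k$, in which case duplicates are discarded. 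The crossing is then a genuine crossing in the sense of the definition.

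Finally, assemble $\gamma$. Join the crossing points by paths in components of $A\setminus\bigcup_i\gamma_i$, which are open sets, so paths exist and can be taken to be embedded polygonal arcs. Arrange that the result is a simple closed curve (resp. arc) homotopic to the core of $A$, and check that it is essential and nonperipheral because it is isotopic to $y$. The main obstacle is this global connectivity step: $A\setminus\bigcup\gamma_i$ may have infinitely many components, since the $\gamma_i$ can oscillate wildly. We must show that we can traverse $A$ from one end to the other crossing only finitely many of them. My first attempt is a compactness argument. Parametrize the route along the core and note that each transversal crossing point has a neighborhood in which passing through costs exactly one intersection with each $\gamma_i$. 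Then cover the core direction by finitely many such neighborhoods plus regions of the complement, possibly after first slightly isotoping $A$'s core so that it lies in the complement of the finitely many tame pieces. If this fails, I would fall back on Corollary 4.2 of \cite{shapiro}, which handles exactly this finiteness.
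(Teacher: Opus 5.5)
Your proposal has a genuine gap. The step you yourself flag as the main obstacle is crossing $A$ from one end to the other while meeting $\bigcup_i\gamma_i$ only finitely often. That step is the whole content of the proposition, and you do not prove it.

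Your compactness sketch assumes a route along which every point has a neighborhood where the $\gamma_i$ look tame. Points where the $\gamma_i$ interact wildly have no such neighborhood: endpoints of shared subarcs, and accumulation points of $\gamma_i\cap\gamma_j$ such as a Cantor set of intersections. You give no reason that a route avoiding all of them exists. Deferring to Corollary 4.2 of \cite{shapiro} does not fill this, since that corollary together with Lemma 4.1 there is essentially the statement itself.

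The missing idea is to isolate the bad set and show it cannot block you. Let $E$ be the union of the pairwise intersections, let $P_i=\partial(\gamma_i^{-1}(E))$, and set $\mathcal{E}=\bigcup_i\gamma_i(P_i)$.
\begin{itemize}
\item Since $\gamma_i^{-1}(E)$ is closed, $P_i$ is a closed nowhere dense subset of the parameter interval, hence zero-dimensional.
\item So $\mathcal{E}$ is a finite union of closed zero-dimensional sets. By the sum theorem it is compact and zero-dimensional, hence totally disconnected.
\item Newman's theorem (Lemma~\ref{complementoftotdisconnected}) then shows that removing $\mathcal{E}$ from a closed regular neighborhood $N\subset U$ of $y$ leaves it path-connected.
\item You can therefore find a core curve (resp.\ spanning arc) in $N\setminus\mathcal{E}$ and thicken it to an annulus (resp.\ strip) $A$ missing $\mathcal{E}$.
\item Only then does the Schoenflies-type argument apply inside $A$ (Lemma~\ref{lemma:finiteintannulus}, i.e.\ Lemma 3.3 of \cite{shapiro}). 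There each component of $\gamma_i\cap\mathrm{Int}(A)$ either avoids $E$ or lies in $E$, since it cannot contain a boundary point of $\gamma_i^{-1}(E)$.
\end{itemize}

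Your local rule of crossing only at points lying on a single $\gamma_i$ is also not viable. You claim that $\gamma_i\setminus\bigcup_{k\ne i}\gamma_k$ is dense in $\gamma_i$ unless $\gamma_i=\gamma_k$, and this is false. Distinct curves can share a subarc, and a curve can even be covered by two others (e.g.\ the three cycles of a theta graph). Suppose two of the $\gamma_i$ share a subarc crossing a thin annular neighborhood $U$ of $y$ from one side to the other. Every essential curve contained in $U$ is isotopic to its core, so it separates the two sides and must meet that shared subarc. A crossing at a point of $\gamma_i\cap\gamma_k$ is therefore unavoidable. Such crossings are harmless in the interior of a shared subarc, which is exactly why the paper removes only the boundary points $\mathcal{E}$ rather than all of $E$.
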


The idea of the proof is that we will  construct $\gamma$ by removing a certain compact and totally disconnected set $\mathcal{E}\subset \surf$ of ``problematic" intersections of $\gamma_i\cap \gamma_j$ from $\surf$.
%and showing that $\surf\setminus \mathcal{E}$ is path-connected.
We will show that $\mathcal{E}$ is totally disconnected by using a result from dimension theory, which we now provide some background information on (for more details on dimension theory, see, for example, \textcite{HurewiczDimensionTheory} or \textcite{Engelking1978}).

%We begin with some background information on zero-dimensional spaces.
A space $X$ is \emph{zero-dimensional} if it has a basis consisting of sets which are both closed and open in $X$. If $X$ is zero-dimensional, then it is totally disconnected and the converse holds if $X$ is locally compact and Hausdorff (see, for example, \textcite[Proposition 3.1.7]{topgroupsTkachekno}).
An example of a zero-dimensional space is any non-empty subspace of $\mathbb{R}$ which does not contain an interval (for more details, see \textcite[1.2.5 Examples]{Engelking1978}).
%the two notions are equivalent. 
%being zero-dimensional is equivalent to being totally disconnected  
We will need the following theorem about unions of zero-dimensional spaces in the proof of Lemma~\ref{lem: E is tot discon}.
%(Theorem II 2 The sum theorem for 0-dimensional spaces, Dimension theory)
\begin{lemma}[{e.g. \textcite[Theorem II 2]{HurewiczDimensionTheory}}]\label{thm:sum of zero-dims}
    A space which is the countable union of closed zero-dimensional subspaces is itself zero-dimensional.
\end{lemma}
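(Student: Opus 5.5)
The plan is to prove the sum theorem in the generality we actually need: $X$ separable metrizable, which covers every subspace of $\surf$ used later. The strategy has two parts. First I would upgrade ``zero-dimensional'' to a separation property. Then I would build the required clopen sets one closed piece at a time by an inductive swelling argument.

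\textbf{Step 1 (separation property).} Call $Y$ \emph{strongly zero-dimensional} if any two disjoint closed sets $A,B\subset Y$ lie in complementary clopen sets. I would show that a zero-dimensional Lindelöf space (in particular any separable metrizable one) is strongly zero-dimensional. Cover $A$ by clopen sets missing $B$ and cover $B$ by clopen sets missing $A$. Since $A\cup B$ is closed, hence Lindelöf, and $Y\setminus(A\cup B)$ is open, this yields a countable cover of $Y$ by clopen sets $W_1,W_2,\dots$, each missing $A$ or missing $B$. Disjointify by setting $W_k'=W_k\setminus\bigcup_{j<k}W_j$; these are still clopen. Then the union $U$ of those $W_k'$ missing $B$ is open, and so is its complement, which is the union of the rest. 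This gives the separation. Conversely, strong zero-dimensionality gives a clopen basis: separate a point $x$ from the complement of a small open ball around $x$.

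\textbf{Step 2 (inductive construction).} Write $X=\bigcup_{n\ge1}F_n$ with each $F_n$ closed and zero-dimensional. By Step 1 it suffices to separate disjoint closed $A,B\subset X$ by a clopen set. Using normality of $X$, choose open $U_0\supset A$ and $V_0\supset B$ with $\overline{U_0}\cap\overline{V_0}=\emptyset$. Inductively, assume open sets $U_{n-1},V_{n-1}$ with disjoint closures are given. The sets $\overline{U_{n-1}}\cap F_n$ and $\overline{V_{n-1}}\cap F_n$ are disjoint and closed in $F_n$. By Step 1 applied to $F_n$, there are complementary clopen sets $P,Q$ of $F_n$ containing them. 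Then $\overline{U_{n-1}}\cup P$ and $\overline{V_{n-1}}\cup Q$ are disjoint closed subsets of $X$; here we use that $F_n$ is closed and that $P\cap\overline{V_{n-1}}=\emptyset$. Using normality again, pick open $U_n\supset \overline{U_{n-1}}\cup P$ and $V_n\supset\overline{V_{n-1}}\cup Q$ with disjoint closures. Finally set $U=\bigcup U_n$ and $V=\bigcup V_n$. These are open, they are disjoint because the sequences are increasing with $U_m\cap V_m=\emptyset$ for every $m$, and they cover $X$ because $F_n\subset P\cup Q\subset U_n\cup V_n$. Hence $U$ is clopen, contains $A$, and misses $B$. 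Applying this with $A=\{x\}$ and $B=X\setminus(\text{ball})$ gives a clopen basis, so $X$ is zero-dimensional.

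\textbf{Main obstacle.} The delicate point is Step 1, namely passing from a clopen basis to separation of arbitrary disjoint closed sets. This is exactly where a countability hypothesis (Lindelöf/separable metric) enters; without it the statement can fail. I would state explicitly that the lemma is applied only to subspaces of $\surf$, which are separable metric. The bookkeeping in Step 2, keeping closures disjoint at every stage, is routine once normality is available.
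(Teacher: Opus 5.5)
Your argument is correct, and it is essentially the classical proof in the reference the paper cites. The paper itself gives no proof; it only points to Hurewicz--Wallman, whose standing convention is that all spaces are separable metric, so your restriction matches the source and covers the application to $\E$. That classical proof runs as yours does: disjoint closed sets in a zero-dimensional separable metric space are separated by clopen sets, and then the sets $U_n,V_n$ are swelled across the closed pieces $F_n$.

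The one small repair is in Step 1. The countable clopen cover of all of $Y$ should come from $Y$ itself being Lindel\"of, after also covering each point of $Y\setminus(A\cup B)$ by a clopen set missing $A\cup B$. It should not come from $A\cup B$ being Lindel\"of, because countably many clopen sets covering only $A\cup B$ need not have a clopen complement.
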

%of clopen sets (recall that a subset $U\subseteq X$ is clopen if it is both closed and open).
%such that every element in this basis is a closed set (equivalently, $X$ has a b
%consisting of closed-open sets (

%For this proof, we will need 
%to prove several topological properties about $\mathcal{E}$
%to show that $\mathcal{E}$ is a closed and totally disconnected subspace of $S_{g,b}$.
%To show this, we need the following result (cite Kechris).
%We will show that this set of problematic intersections is a closed subset of a Cantor set and then working in the remaining surface.

We now define the set of ``problematic" intersections between a finite set of curves or arcs.
%We will now define the set of ``problematic" intersections between a finite set of curves or arcs.
Given a collection of distinct curves or arcs $\gamma_1,\ldots, \gamma_n$ in a surface $\surf,$ we define $\mathcal{E}(\gamma_1,\ldots,\gamma_k)\subset \surf$ in the following way. We will abuse notation by treating the $\gamma_i$ both as injective maps from $S^1 = I/(\{0\}\sim\{1\})$ or $I$ into $\Sigma$ and as the images of such maps; the use should be clear from context.
    
Let $E=\{p\in \surf \mid p\in \gamma_i\cap \gamma_j \text{ for some }i,j\};$ these are all the points of intersection between the arcs and curves. Intersections where two curves or arcs coincide for an open interval pose no problem to us, so we wish to exclude them. To this end, let $P_i=\partial(\gamma_i^{-1}(E))\subset I;$ we note that $P_i$ is empty if $\gamma_i$ is disjoint from all other $\gamma_j$. To get these points back on the surface, let $\mathcal{E}(\gamma_1,\ldots,\gamma_k)=\bigcup_i \gamma_i(P_i).$ We will often simplify notation and write $\mathcal{E}$ for $\mathcal{E}(\gamma_1,\ldots,\gamma_k)$.

%Lemma: $\mathcal{E}$ is compact, separable, and metrizable.

%To show that $\mathcal{E}$ is totally disconnected, we will use certain results from dimension theory (for more details, see , for example). A space $X$ is zero-dimensional if. If $X$ is Hausdorff, this condition is equivalent to. An example of a zero-dimensional space is.

%To show that  $\mathcal{E}$ is totally disconnected, it suffices to show that $\mathcal{E}$ is zero-dimensional. In order to do this, we need the following result.

%Theorem:
\begin{lemma}\label{lem: E is tot discon}
  The space $\E$ is compact and totally disconnected.
  %As a result, $\E$ is a closed subset of a Cantor set.
\end{lemma}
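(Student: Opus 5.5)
We will prove compactness and total disconnectedness separately, working one curve at a time in the parameter domain $I$ (or $S^1$).

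\textbf{Compactness.} For each pair $i\neq j$, the set $\gamma_i\cap\gamma_j$ is an intersection of compact sets, hence compact; so $E$ is a finite union of compact sets and is closed in $\surf$. Since $\gamma_i$ is continuous, $\gamma_i^{-1}(E)$ is closed in $I$. The boundary of any subset of $I$ is closed, so $P_i=\partial(\gamma_i^{-1}(E))$ is a closed subset of the compact space $I$ and is therefore compact. Hence $\gamma_i(P_i)$ is compact, and $\E=\bigcup_i\gamma_i(P_i)$ is a finite union of compact sets, so it is compact. (Here $E$ should be read with $i\neq j$, or else $\gamma_i^{-1}(E)=I$ and $P_i=\emptyset$; either reading gives a closed set.)

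\textbf{Total disconnectedness.} We show first that each $P_i$ is zero-dimensional. Since $\gamma_i^{-1}(E)$ is closed, $P_i$ equals $\gamma_i^{-1}(E)$ minus its interior. So $P_i$ has empty interior in $I$ and therefore contains no interval. By the example quoted from Engelking, a subspace of $\mathbb{R}$ containing no interval is zero-dimensional. In the $S^1$ case we apply this after removing a point, or locally. Next, $\gamma_i$ restricted to the compact set $P_i$ is a continuous injection into a Hausdorff space, so it is a homeomorphism onto its image. Hence $\gamma_i(P_i)$ is zero-dimensional, and it is closed in $\E$ because it is compact. Now $\E$ is a finite, hence countable, union of closed zero-dimensional subspaces, so Lemma~\ref{thm:sum of zero-dims} shows that $\E$ is zero-dimensional. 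Zero-dimensional spaces are totally disconnected.

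\textbf{Main obstacle.} The only delicate points are bookkeeping: interpreting $E$ with $i\neq j$, and checking that the sum theorem applies, which needs each piece closed in $\E$. Compactness of each piece handles the latter. The homeomorphism-onto-image step is what lets zero-dimensionality transfer from $P_i$ to the surface, and it relies on the injectivity of the simple curves $\gamma_i$.
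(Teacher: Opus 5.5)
Your proof is correct and follows the same route as the paper. Compactness comes from writing $\E$ as a finite union of compact images, and total disconnectedness from showing each $P_i$ is zero-dimensional, transferring this to $\gamma_i(P_i)$ through the embedding, and applying the sum theorem (Lemma~\ref{thm:sum of zero-dims}). You also supply details the paper leaves implicit: notably that $\gamma_i^{-1}(E)$ is closed, which is what actually forces its boundary to have empty interior (the boundary of an arbitrary subset of $I$ can contain an interval), and that each $\gamma_i(P_i)$ is closed in $\E$.
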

\begin{proof}
    The space $\E$ is compact because it is a finite union of compact spaces.

    To show that $\E$ is totally disconnected, it suffices to show that it is zero-dimensional. First, we show that $\gamma_i(P_i)$ is zero-dimensional. Since $P_i$ is the boundary of a subspace of $I$ (and therefore $\mathbb{R}$), it cannot contain an interval and so it
    %follows from \cite[Example 1.2.5]{Engelking1978} that $P_i$ 
    is zero-dimensional. Moreover, since $\gamma_i$ is an embedding, $P_i$ is homeomorphic to $\gamma_i(P_i).$ Homeomorphisms preserve dimension, so $\gamma_i(P_i)$ is zero-dimensional. Since $\E=\bigcup_{i}\gamma_i(P_i)$ is a finite union of closed zero-dimensional spaces, by Lemma~\ref{thm:sum of zero-dims},
%\cite[Theorem 1.3.1]{Engelking1978}, 
$\E$ itself is zero-dimensional. We conclude that $\E$ is totally disconnected.
\end{proof}
%Lemma: $\mathcal{E}$ is compact and zero-dimensional.

%Lemma: $\mathcal{E}$ is a closed subset of a Cantor set.

%\begin{lemma}
%    The space $\surf\setminus\mathcal{E}$ is path-connected.
%\end{lemma}
%\begin{proof}
%    As a consequence of , $\surf\setminus\mathcal{E}$ is connected. Since $\surf$ is locally path-connected and $\surf\setminus\mathcal{E}$ is a connected open subspace of $\surf$, the space $\surf\setminus\mathcal{E}$ is path-connected.
%\end{proof}
%Do (resp. arc).

Part of the proof of Proposition~\ref{lemma:keylemma} involves working with a  regular neighborhood of a given curve or arc $y$ in $S_{g,b}$. If $y$ is a curve, a regular neighborhood is homeomorphic to an annulus. If $y$ is an arc, such a neighborhood is a strip in $\surf$. 
More formally, a \emph{strip} in $\surf$  is an embedded copy of $I\times I$ where $\{0\}\times I,\{1\}\times I \subseteq \partial\surf.$ We will assume that $I\times \{0\}$ does not bound a disk with $\partial \surf$.

In the proof of Proposition~\ref{lemma:keylemma}, we will also need the following lemma. 
\begin{lemma}\label{lemma:finiteintannulus}
    Let $S=\surf$ and $\gamma_1,\ldots,\gamma_n$ be a collection of curves or arcs in $\surf.$ Let $A$ be a closed annulus (resp. a strip) in $\surf$ such that, for all $i\neq j,$ the intersection $\mathrm{Int}(A)\cap \gamma_i\cap \gamma_j$ is a union of connected components of $\gamma_i \cap \mathrm{Int}(A)$. Then, there exists a curve (resp. an arc) $\gamma \subset A$ 
    %(if $A$ is an annulus) or an arc $\alpha\subset A$ (if $A$ is a strip) 
    such that $|\gamma_i\cap \gamma|<\infty$ or $|\gamma_i\cap \alpha|<\infty$  for all $i$ and all such intersections are crossing.
\end{lemma}

\begin{proof}
    This lemma is proven identically to Lemma 3.3 of Shapiro \cite{shapiro} despite having more curves in the collection.
    The key input is the tameness of arcs in $\mathbb{R}^2$ (see, for example, \textcite[Ch. 10]{moise1977geometric}).
\end{proof}

We will also need the following result to prove Proposition~\ref{lemma:keylemma}.
%To show that $\surf\setminus\mathcal{E}$ is path-connected, we need the following result.
\begin{lemma}[{\textcite[Chapter V, Theorem 14.3]{newman1951elements}}]\label{complementoftotdisconnected}
    Let $X=\mathbb{R}^{2}$ or the closed disk $\bar{D}^{2}$ and let $V$ be a closed subspace of $X$. Suppose that two points $x, y$ in $X$ are in different connected components of $X\setminus V$. Then there is a connected component $C$ of $V$ such that $x$ and $y$ are in different components of $X\setminus C$.
    %Suppose that two $x$ and $y$ in $\mathbb{R}^{2}$ are separated by a closed subspace $V$ of $\mathbb{R}^{2}$ (by this, we mean that $x$ and $y$ are in different connected components of $\mathbb{R}^{2}\setminus V$). Then $x$ and $y$ are separated by a connected component of $V$.
\end{lemma}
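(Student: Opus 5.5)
The plan is to deduce the lemma from unicoherence, through the Phragmén--Brouwer properties. Both $\mathbb{R}^2$ and $\bar D^2$ are connected, locally connected and \emph{unicoherent}: whenever the space is a union of two closed connected sets, their intersection is connected. For $\bar D^2$ this follows from contractibility via a Mayer--Vietoris argument in \v{C}ech cohomology, since $\check H^1=0$. For $\mathbb{R}^2$ one can quote the classical statement, or deduce it from the case of $S^2$. I would then prove the contrapositive. Assume no component of $V$ separates $x$ from $y$; the goal is that $V$ does not separate them either.

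\textbf{Step 1 (Janiszewski-type lemma).} In a connected, locally connected, unicoherent space $X$, let $A,B$ be disjoint closed sets, neither of which separates $x$ from $y$. Then $A\cup B$ does not separate them. The proof runs as follows. Let $K_A$ be the component of $X\setminus A$ containing $x$ and $y$, and define $K_B$ similarly. Suppose $A\cup B$ separates $x$ from $y$. Then, using local connectedness, one builds a decomposition $X=F_1\cup F_2$ into closed connected sets whose intersection splits into a piece near $A$ and a disjoint piece near $B$. Concretely, one takes the closure of the component of $x$ in $X\setminus(A\cup B)$, and then enlarges it and its complement to connected closed sets. This contradicts unicoherence. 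This is the standard argument, and I would follow it closely, for instance as in Newman's book.

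\textbf{Step 2 (finite decomposition of $V$).} Assume first that $V$ is compact. This holds automatically for $X=\bar D^2$. For each component $C$ of $V$, the points $x$ and $y$ lie in one component of $X\setminus C$, which is open and path connected. Choose a path $p_C$ from $x$ to $y$ in it, and an open neighbourhood $U_C\supset C$ disjoint from $p_C$. In a compact Hausdorff space, components coincide with quasi-components. Hence there is a clopen subset $W_C$ of $V$ with $C\subset W_C\subset U_C$, and then $W_C$ does not separate $x$ from $y$, since $p_C$ avoids it. By compactness finitely many of the $W_C$ cover $V$. Refine them to pairwise disjoint clopen sets $V_1,\dots,V_m$, each contained in some $W_C$ and therefore non-separating. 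Applying Step 1 inductively to $V_1\cup\dots\cup V_{k}$ and $V_{k+1}$ shows that $V$ does not separate $x$ from $y$, which is the contradiction we wanted.

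\textbf{Step 3 (noncompact $V\subset\mathbb{R}^2$), the main obstacle.} I would not just add $\infty$ to $V$, because that can merge unbounded components of $V$ and so change the component structure. Instead, take a large closed round disk $D$ containing $x$ and $y$ in its interior. Since $x$ and $y$ lie in different components of $\mathbb{R}^2\setminus V$, they also lie in different components of $D\setminus V$. By the compact case applied in $X=D$, some component $C'$ of $V\cap D$ separates $x$ from $y$ in $D$. The delicate point is to pass from $C'$, which separates only inside $D$, to a component of $V$ separating in $\mathbb{R}^2$. For this I would let the radius of $D$ tend to infinity. Take components $C'_n$ of $V\cap D_n$ that separate in $D_n$. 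They meet a fixed compact neighbourhood of the segment $[x,y]$, so a subsequence has a Hausdorff limit (in the sense of closed sets on compacta), which is connected and contained in $V$. It remains to check that the component of $V$ containing this limit separates $x$ from $y$. Any path from $x$ to $y$ avoiding it lies in some $D_n$, and would then have to meet $C'_n$ for all large $n$. Since these $C'_n$ accumulate only on the limit, this yields a contradiction. Making this limiting argument airtight is where I expect the real work to lie. The fallback is to cite Newman, Chapter V, directly, as the paper does.
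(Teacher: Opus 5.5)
\textbf{Verdict: genuine gap in Step 3.}

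The paper does not prove this lemma; it only cites Newman. Your Steps 1 and 2 are a correct, standard derivation of the compact case from unicoherence. That covers $\bar D^2$, and also the case the paper actually uses, where $V=\mathcal{E}$ is compact. The gap is in Step 3: the claim that a subsequential limit of the $C'_n$ is connected is false in $\mathbb{R}^2$. In a compact metric space a Hausdorff limit of continua is a continuum. In the plane, however, the limit can be disconnected, with its pieces joined only through $\infty$ --- exactly the merging you wanted to avoid.

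Here is a counterexample to that step. For $m\ge 3$ let $\hat C_m=\{1/m\}\times[-m,\infty)\cup[1/m,1-1/m]\times\{-m\}\cup\{1-1/m\}\times[-m,\infty)$. Let $V$ be the union of all $\hat C_m$ together with the lines $\{0\}\times\mathbb{R}$ and $\{1\}\times\mathbb{R}$; this is closed, and its components are the $\hat C_m$ and the two lines. Take $x=(1/2,0)$ and $y=(2,0)$.
\begin{itemize}
\item If $D_n$ has radius $R_n$ and $m<\sqrt{R_n^2-1}$, then $\hat C_m\cap D_n$ is a cross-cut of $D_n$ and a component of $V\cap D_n$ separating $x$ from $y$.
\item So $C'_n=\hat C_{m_n}\cap D_n$ with $m_n\to\infty$ is a legitimate choice.
\item These sets converge to the union of the two lines, which lies in two different components of $V$, and $\{0\}\times\mathbb{R}$ does not separate $x$ from $y$.
\end{itemize}
Thus ``the component of $V$ containing this limit'' need not exist. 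Passing to a component of the limit does not rescue the argument, because your path argument only shows that a path from $x$ to $y$ must meet the limit as a whole. If infinitely many $C'_n$ lie in a single component $\hat C$ of $V$, then $\hat C$ separates with no limit needed; the difficulty arises precisely when they lie in infinitely many components.

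\textbf{The missing idea.} Unicoherence handles noncompact $V$ directly, with no limiting argument, and in fact makes Steps 1 and 2 unnecessary.
\begin{itemize}
\item Let $U$ be the component of $X\setminus V$ containing $y$, and let $D$ be the component of $X\setminus\overline U$ containing $x$. Note $x\notin\overline U$, because the component of $X\setminus V$ containing $x$ is open and disjoint from $U$.
\item By local connectedness $D$ is open. Every other component $D'$ of $X\setminus\overline U$ satisfies $\emptyset\ne\partial D'\subseteq\overline U$, so $\overline U\cup D'=\overline U\cup\overline{D'}$ is connected.
\item Hence $X\setminus D$, which is $\overline U$ together with all such $D'$, is closed and connected.
\item Unicoherence applied to $X=\overline D\cup(X\setminus D)$ shows that $\partial D=\overline D\cap(X\setminus D)$ is connected.
\item Moreover $\partial D\subseteq\overline U\setminus U\subseteq V$, and $X\setminus\partial D=D\sqcup(X\setminus\overline D)$ with $x\in D$ and $y\in U\subseteq X\setminus\overline D$.
\end{itemize}
Therefore $\partial D$, and hence the component of $V$ containing it, separates $x$ from $y$. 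Nothing here uses compactness of $V$ or of $X$, so this settles $\mathbb{R}^2$ and $\bar D^2$ at once.
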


\begin{proof}[Proof of Proposition~\ref{lemma:keylemma}]
    Let $N$ be a regular neighborhood of $y$ that is closed in $\surf$. We will construct $\gamma$ in such a way that it is contained in $N$.
    %which we want $\gamma$ to be contained. 
    The space $N$ is homeomorphic to an annulus or a strip, depending on whether $y$ is a curve or an arc. Let $\E=\E(\gamma_1,\ldots,\gamma_n)$ and let $N^*=N\setminus \E.$ Since $\E$ is totally disconnected by Lemma~\ref{lem: E is tot discon}, each connected component of $\E$ is a point. Lemma~\ref{complementoftotdisconnected} then implies that $N^{*}$ is connected. Since $N$ is locally path-connected and $N^{*}$ is a connected open subspace of $N$, the space $N^{*}$ is path-connected.
    %Then $N^*$ is path-connected.
    
%    Since $E$ is closed by Lemma~\ref{lem: E is tot discon}, $N^{*}$ is an open subspace of $N$. In addition, since $N$ is locally-path connected and $N^{*}$ is open, $N^{*}$ is locally path-connected.
    
    %Since $\E$ is a closed and totally disconnected subset of a surface, it follows from \cite[Thm. 14.2]{newman1951elements} that $N^*$ is path-connected: this is the key point.
 %   Since $\E$ is compact and totally disconnected by Lemma~\ref{lem: E is tot discon}, $N\cap \E$ is a compact and totally disconnected subset of $N$. Therefore, by embedding $N$ into $S^{2}$, we may apply Alexander duality to $N\cap \E\subset N$
    %by an application of Alexander duality (applied to $N\cap \E\subset N$ and with $N$ embedded in $S^{n}$), 
  %  to conclude that $N^*$ is connected. Since a locally path-connected is path-connected if and only if it is connected, $N^*$ is path-connected. We now will show that $N^*$ is path-connected.
    %In the following, we conflate $N$ and $N^*$ by considering the natural inclusion $N^*\hookrightarrow N$ (and its preimage).

   Suppose $N$ is a strip. Then there exists an arc $a$ in $N^*$ connecting the components of $\partial \surf$ in $N$. We may take a regular neighborhood of $a$ to obtain a strip $A$ in $N^*$. We can now apply Lemma~\ref{lemma:finiteintannulus} to find an arc $\gamma$ in $N$ that intersects all $\gamma_i$ finitely many times.

    Suppose $N$ is an annulus. There exists a point in each component of $\partial N$ that is not in $\E.$ Consider these two points as points in $N^*$ and connect them with an arc $\alpha$ in $N^*$. Let $x\in \alpha,$ and then cut $N^*$ along $\alpha$ to form a strip (that has two copies of $\alpha$). Since the strip is also a connected surface, there is an arc $c$ connecting the two copies of $x.$ Reglue the strip into $N^*$; the endpoints of $c$ glue up into a curve $\gamma'$. 
    %Call the inclusion of this curve into $\surf$ $\gamma.$ 
    Let $A$ be a regular neighborhood of $\gamma'$ inside $N^*$. Since $A$ is an annulus, we can apply Lemma~\ref{lemma:finiteintannulus} to find a curve $\gamma$ in $N$ that intersects each $\gamma_i$ finitely many times.
\end{proof}

\section{Homotopy type of the fine curve complex}\label{sec:fcc}

The goal of this section is to prove Theorem~\ref{thm:cchom}. To this end, we first prove that
the fiber over a simplex $\sigma\in \cc(\surf)$
under the map $f:\fine(\surf)\to \cc(\surf)$
%the subcomplex of $\fine(\surf)$ induced by a simplex in the curve complex 
is contractible (Section~\ref{subsec:SIC}) and then complete the proof of Theorem~\ref{thm:cchom} (Section~\ref{subsec:proofofcchom}). We begin in Section~\ref{sec:simpcomplex} by recalling some background information and fixing some conventions involving simplicial complexes. For more on simplicial complexes, see, for example, \textcite[Sections 1.1--1.3]{MunkresAlgTop}.

\subsection{Simplicial complexes}\label{sec:simpcomplex}
Given a simplicial complex $X$ and a simplex $\sigma\in X$, we  denote the dimension of $\sigma$ by $\dim(\sigma)$. We denote the link and (closed) star of $\sigma$ by $\link(\sigma)$ and $\Star(\sigma)$ respectively. We denote the $k$-skeleton of $X$ by $X^{(k)}$.

We denote the geometric realization of $X$ by $|X|$. When we say that a simplicial complex $X$ has a topological property, we will mean that $|X|$ has that property.

A point $x\in |X|$ is represented in barycentric coordinates by a formal sum $\sum_{v\in X^{(0)}} t_{v} v$,with $t_{v}\in [0,1]$, all but finitely of the $t_{v}$s equal to zero (with the set of such $v$ with $t_{v}$ non-zero forming a simplex in $X$), and $\sum_{v\in X^{(0)}}t_{v}=1$. 

 Let $X$ and $Y$ be simplicial complexes. A map of spaces $g\colon |X|\to |Y|$ is \emph{simplicial} if it is induced from a simplicial map of simplicial complexes $X\to Y$.
In this case, we will abuse notation and also use $g$ to denote the underlying simplicial map $X\to Y$  inducing $g\colon |X|\to |Y|$. 

For $k\geq 0$, a \emph{combinatorial k-manifold} $M$ is a nonempty simplicial complex that satisfies the following inductive property. If $\sigma\in M$, then $\dim(\sigma)\leq k$. In addition, if $k-\dim(\sigma)-1\geq 0$, then $\link(\sigma)$ is a combinatorial $(k-\dim(\sigma)-1)$-manifold homeomorphic to either a $(k-\dim(\sigma)-1)$-sphere or a closed $(k-\dim(\sigma)-1)$-ball. A combinatorial $k$-manifold homeomorphic to a $k$-sphere will be called a \emph{combinatorial k-sphere}. We will often denote a combinatorial $k-$sphere by $\mathbb{S}^{k}$.
    %A \emph{combinatorial triangulation} of $S^{n}$ is a simplicial complex whose geometric realization is piecewise-linear homeomorphic to $S^{n}$.
%Let $\sigma\in X$.  We denote the (closed) star and link of $\sigma$ in $X$ by $\Star(\sigma)$ and $\link(\sigma)$ respectively. 
%\begin{definition}
%    For $k\geq 0$, a \emph{combinatorial k-manifold} $M$ is a non-empty simplicial complex that satisfies the following inductive property. If $\sigma\in M$, then $\dim(\sigma)\leq k$. In addition, if $k-\dim(\sigma)-1\geq 0$, then $\link(\sigma)$ is a combinatorial $(k-\dim(\sigma)-1)$-manifold homeomorphic to either a $(k-\dim(\sigma)-1)$-sphere or a closed $(k-\dim(\sigma)-1)$-ball. A combinatorial $k$-manifold homeomorphic to a $k$-sphere will be called a \emph{combinatorial k-sphere}.
    %A \emph{combinatorial triangulation} of $S^{n}$ is a simplicial complex whose geometric realization is piecewise-linear homeomorphic to $S^{n}$.
%\end{definition}
%By abuse of notation, if $S$ is a combinatorial $k$-sphere, we will abuse notation and also denote its geometric realization by $S$. It will be clear from context when we are working with the underlying simplicial complex or its geometric realization.

By the simplicial approximation theorem (see, for example, \textcite[Theorem 16.1]{MunkresAlgTop}), every element of $\pi_{k}(|X|)$ is represented by a simplicial map
$|\mathbb{S}^{k}|\to |X|$.
%of simplicial complexes from a combinatorial $k$-sphere $\mathbb{S}^{k}$ to $X$. 
For this reason, for the rest of the paper, whenever we work with a map $g\colon S^{k}\to |X|$, we will implicitly treat $S^{k}$ as a combinatorial $k$-sphere  $\mathbb{S}^{k}$ and assume that $g\colon |\mathbb{S}^{k}|\to |X|$ is a simplicial map.

In the proof of Theorem~\ref{thm:cchom}, we will need the following proposition, which gives a condition for when a simplicial map of simplicial complexes is a homotopy equivalence.
%(it is a special case of Quillen's Theorem A). {\color{red}CITE!!!}
\begin{proposition}[e.g. {\textcite[Corollary 2.7]{HV}}]\label{prop: contractible fib}
    Let $g\colon X\to Y$ be a simplicial map of simplicial complexes. If the subcomplex $g^{-1}(\sigma)$ is contractible for all simplices $\sigma\in Y$, then $g$ is a homotopy equivalence.
\end{proposition}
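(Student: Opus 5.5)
This is Quillen's Theorem A in its simplicial form, and I will prove it by showing that $g$ induces a bijection on all homotopy groups and then invoking Whitehead's theorem, since geometric realizations of simplicial complexes are CW complexes. The device is to compare $g$ with a deformation of $|X|$ onto a "section-like" map built simplex by simplex over $Y$. More precisely, I will construct a map $s\colon |Y|\to |X|$ with $g\circ s \simeq \mathrm{id}_{|Y|}$ and $s\circ g\simeq \mathrm{id}_{|X|}$. Alternatively, I will show directly that $\pi_k(|Y|,|X|)$-type obstructions vanish using spheres.

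\textbf{Step 1 (a section over the barycentric subdivision).} Let $Y'$ be the barycentric subdivision of $Y$; its vertices are simplices $\sigma$ of $Y$, and its simplices are chains $\sigma_0<\cdots<\sigma_k$. I define $s$ on $|Y'|=|Y|$ by induction over the skeleta of $Y'$, maintaining the invariant that $s$ carries the closed simplex of a chain $\sigma_0<\cdots<\sigma_k$ into $|g^{-1}(\sigma_k)|$.
\begin{itemize}
\item On a vertex $\sigma$, choose $s(\sigma)\in |g^{-1}(\sigma)|$. This set is nonempty because it is contractible.
\item Given $s$ on the boundary of a chain simplex, that boundary maps into $|g^{-1}(\sigma_k)|$, because $g^{-1}(\sigma_i)\subseteq g^{-1}(\sigma_k)$ for $\sigma_i\le\sigma_k$.
\item Since $|g^{-1}(\sigma_k)|$ is contractible, $s$ extends over the simplex.
\end{itemize}

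\textbf{Step 2 ($g\circ s\simeq \mathrm{id}$).} For each point $y$ in the open simplex of the chain above, both $g(s(y))$ and $y$ lie in the closed simplex $|\sigma_k|$ of $|Y|$. Straight-line homotopy within $|\sigma_k|$ is therefore well defined and continuous, since on overlaps the carrying simplices are compatible.

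\textbf{Step 3 ($s\circ g\simeq\mathrm{id}_{|X|}$).} This is the main obstacle. I will build the homotopy $H\colon |X|\times I\to |X|$ by induction on the cells of $|X|\times I$, with one constraint: for a simplex $\tau$ of $X$, $H$ must map $|\tau|\times I$ into $|g^{-1}(g(\tau))|$. The two ends are already in that set.
\begin{itemize}
\item $|\tau|\subseteq |g^{-1}(g\tau)|$ trivially.
\item $s(g|\tau|)\subseteq |g^{-1}(g\tau)|$ by the invariant of Step 1, since every chain in $g(\tau)$ ends at a face of $g(\tau)$.
\end{itemize}
Proceeding by induction on $\dim\tau$, suppose $H$ is defined on $\partial(|\tau|\times I)$ with image in $|g^{-1}(g\tau)|$. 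Faces $\tau'\subset\tau$ satisfy $g(\tau')\subseteq g(\tau)$, so their images land there too. Contractibility of $|g^{-1}(g\tau)|$ then lets $H$ extend over $|\tau|\times I$. Continuity follows from the weak topology, defining $H$ cellwise.

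Hence $s$ is a homotopy inverse and $g$ is a homotopy equivalence. The delicate points are verifying the carrier invariants, and checking that the infinite-dimensional case causes no trouble because all extensions are done cell by cell in CW topology.
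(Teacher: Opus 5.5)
Your argument is correct, and it takes a different route from the paper only because the paper gives no proof: it quotes Hatcher--Vogtmann's Corollary 2.7, a simplicial form of Quillen's fiber lemma (Theorem A). You instead give a self-contained contractible-carrier argument. You build an explicit homotopy inverse $s\colon |Y'|=|Y|\to|X|$ over the barycentric subdivision, with $s$ carrying the simplex of a chain $\sigma_0<\cdots<\sigma_k$ into $|g^{-1}(\sigma_k)|$. The homotopy $g\circ s\simeq\mathrm{id}$ is then carried by the closed simplices $|\sigma_k|$, and $s\circ g\simeq\mathrm{id}$ by the contractible subcomplexes $|g^{-1}(g(\tau))|$. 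The carrier checks you flag all go through: $g^{-1}(\sigma_i)\subseteq g^{-1}(\sigma_k)$ for $\sigma_i\le\sigma_k$, $g(\tau')\subseteq g(\tau)$ for faces $\tau'\subseteq\tau$, and every chain simplex of $Y'$ inside $|g(\tau)|$ ends at a face of $g(\tau)$. Since $I$ is compact, $|X|\times I$ and $|Y|\times I$ carry the weak topology with respect to the cells $|\tau|\times I$. So your cellwise definitions are continuous even for the uncountable, non-locally-finite, infinite-dimensional complexes to which the paper applies the result.

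The citation route would apply Quillen's fiber lemma to the face-poset map $F(X)\to F(Y)$, $\tau\mapsto g(\tau)$. The preimage of $F(Y)_{\le\sigma}$ under this map is exactly the face poset of $g^{-1}(\sigma)$. One then observes that the induced map of barycentric subdivisions is homotopic to $|g|$. Given the black box, that is shorter, and it sits in a framework that also handles fibers that are only highly connected. Yours uses nothing beyond extending maps from spheres into contractible spaces, produces the inverse directly, and needs no appeal to Whitehead's theorem.

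One cleanup: your opening paragraph announces two strategies you never use. These are Whitehead's theorem on homotopy groups, and relative groups $\pi_k(|Y|,|X|)$, which are not even defined without passing to a mapping cylinder. Delete that paragraph, since Steps 1--3 already prove the statement outright.
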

\subsection{Preimages of simplices are contractible}\label{subsec:SIC} The goal of this section is to prove Theorem~\ref{thm:singleisotopyclasscontractible}, which states that the preimage of a simplex in $\cc(\surf)$ is contractible.

   Given a simplex $A$ in $\cc(\surf)$, let $\mathcal{C}_A^\dagger(S_{g,b})$ denote the subcomplex $f^{-1}(A)$ in $ \fine(\surf)$. A simplex  $\{v_{0},\ldots, v_{q}\}\in \fine(\surf)$ is in $\mathcal{C}_A^\dagger(S_{g,b})$ if $f(v_{i})$ is a vertex in $A$ for all $i$.
%Let $A$ be a collection of isotopy classes of curves that admit pairwise disjoint representatives, so that $A$ induces a simplex in $\cc(\surf).$ Denote the subcomplex of $\fine(\surf)$ spanned by all representatives of the isotopy classes in $A$ by $\fine_A(\surf).$ In particular, $\fine_A(\surf)$ is the preimage of $A$ under the natural collapsing map: $\mathcal{C}_A^\dagger(S_{g,b})= f^{-1}(A)$. 

\begin{theorem}\label{thm:singleisotopyclasscontractible}
    Let $\surf$ be an orientable surface with $g\geq 1$ or $b\geq 4$ and let $A$ be a simplex in $\cc(\surf)$. 
    %Let $A$ be a collection of isotopy classes of curves admitting disjoint representatives.
    Then $\fine_A(\surf)$ is contractible.
\end{theorem}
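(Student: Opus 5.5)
We prove that every homotopy group of $\fine_A(\surf)$ is trivial; Whitehead's theorem then gives contractibility. Fix $k\geq 0$ and a map $\phi\colon S^k\to|\fine_A(\surf)|$. By the conventions of Section~\ref{sec:simpcomplex}, $\phi$ is a simplicial map from a combinatorial sphere $\mathbb{S}^k$. Its image therefore involves only finitely many curves $\gamma_1,\ldots,\gamma_n$, and each of these represents an isotopy class in $A$. The plan is to find a single curve $\gamma$ that is disjoint from every $\gamma_i$ and whose isotopy class lies in $A$. Then $\gamma$ is adjacent to all vertices of the image, so the image lies in the star of $\gamma$. The star is a cone, so $\phi$ is nullhomotopic.

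A disjoint curve need not exist outright, since the $\gamma_i$ may be pairwise far from one another in a way that blocks every disjoint representative. So the strategy is an inductive ``surgery'' reducing how badly the vertices of $\phi$ intersect a target curve.

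\textbf{Step 1: choose the target.} Fix a vertex $a\in A$ and a curve $y$ representing $a$. Apply Proposition~\ref{lemma:keylemma} to $\gamma_1,\ldots,\gamma_n$ and $y$, using a small annular neighborhood of $y$. This gives a curve $\gamma$ isotopic to $y$, hence in class $a$, that meets each $\gamma_i$ in finitely many crossing points.

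\textbf{Step 2: remove the intersections one at a time.} Let $\Gamma$ be the set of vertices of $\mathbb{S}^k$ whose image meets $\gamma$, and induct on the total number of intersection points with $\gamma$. Every $\gamma_i$ meeting $\gamma$ lies in a class of $A$, and the classes of $A$ have pairwise disjoint representatives. Hence $\gamma_i$ and $\gamma$ have geometric intersection number zero, and by the bigon criterion, taken from an innermost bigon, there is a bigon $D$ bounded by an arc of $\gamma$ and an arc of some $\gamma_i$. Possibly after re-choosing $D$ innermost among all the curves $\gamma_j$, its interior contains only arcs of other $\gamma_j$ crossing it from one side to the other. I then push $\gamma_i$ across $D$, slightly past $\gamma$, to a curve $\gamma_i'$. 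This curve is isotopic to $\gamma_i$ and has two fewer intersections with $\gamma$. I must also arrange that $\gamma_i'$ is disjoint from every curve disjoint from $\gamma_i$, in particular every $\phi(w)$ with $w\in\link(v)$ and $\phi(v)=\gamma_i$. This holds if the push happens inside a thin neighborhood of $D$ together with $\gamma_i$ and follows the $\gamma_j$ crossing $D$. Concretely, I would first apply Proposition~\ref{lemma:keylemma} to replace $\gamma_i$ by a nearby curve in its class, meeting the others finitely and transversally, before doing the push.

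With the new curve, a standard homotopy moves $\phi$ to the map $\phi'$ that sends $v$ to $\gamma_i'$ and agrees with $\phi$ elsewhere. Since $\gamma_i$ and $\gamma_i'$ are disjoint after a small further perturbation, $\phi$ and $\phi'$ both factor through $\Star(v)$ with $\gamma_i$ and $\gamma_i'$ adjacent, so the two maps are homotopic. Iterating reduces the intersection count with $\gamma$ to zero, which yields the cone point and finishes the proof.

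\textbf{Main obstacle.} The hard step is the push in Step 2: guaranteeing that $\gamma_i'$ stays disjoint from all curves disjoint from $\gamma_i$. Those curves can accumulate on $\gamma_i$ and wind wildly inside $D$. My first approach would be to avoid pushing $\gamma_i$ itself. Instead, I would replace it by a ``parallel'' curve built inside the complementary region of $\gamma_i\cup\bigcup_{w\in\link(v)}\phi(w)$ that contains $\gamma_i$ on its boundary. Using Proposition~\ref{lemma:keylemma}, restricted to that open region, I would find a curve that is close to $\gamma_i$ on the side away from $D$, detours around $D$, and remains in the class of $\gamma_i$. Nontrivial topology is needed exactly here, to check that this region is rich enough for such a curve to exist; the proof of Proposition~\ref{lemma:keylemma}, via Lemma~\ref{complementoftotdisconnected}, is the tool I expect to provide this.
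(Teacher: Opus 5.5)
Your skeleton is the paper's: remove bigons with a single target curve one at a time until the sphere lies in that curve's star (Lemma~\ref{lemma:spheretostar}), then apply Whitehead's theorem. Taking a fresh target $\gamma$ from Proposition~\ref{lemma:keylemma} is a legitimate variant, and it even saves a step. The paper instead first makes all pairwise intersections finite and crossing (Lemmas~\ref{lemma:nicesphererepcurves} and~\ref{lemma:nicesphere}) and cones to one of the image curves; in your version only intersections with the target enter the induction.

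The gap is the step you yourself call the main obstacle. You leave it open, and the workaround you sketch (a curve in a complementary region that is ``rich enough'') is not argued. Your worry that curves disjoint from $\gamma_i$ might wind around inside $D$ points to the missing observation, which is exactly the one the paper uses. If $D$ is innermost among \emph{all} bigons that $\gamma$ forms with the current image curves, then every image curve $\gamma_k$ disjoint from $\gamma_i$ misses $D$ entirely. Indeed, $\gamma_k$ cannot meet the $\gamma_i$-edge of $D$, so it can only enter $D$ by crossing the $\gamma$-edge. It must then leave through the $\gamma$-edge too, cutting off a bigon of $\gamma_k$ with $\gamma$ strictly inside $D$, a contradiction. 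The same argument shows that $\gamma_i$ does not meet the interior of the $\gamma$-edge. Nor is there any accumulation: the curves disjoint from $\gamma_i$ are finitely many compact sets missing the compact set $D\cup\gamma_i$, so they miss a whole neighborhood $U$ of it.

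With this, the step closes as in the paper. Take a copy of $\gamma_i$ parallel to it on the $D$ side and inside $U$; this is possible because $\gamma_i\cup\gamma$ is a finite graph whose vertices are crossings. Then surger it across $D$ to just beyond the $\gamma$-edge. The result is isotopic to $\gamma_i$, disjoint from $\gamma_i$ and from every curve disjoint from $\gamma_i$, and meets $\gamma$ in two fewer crossing points. That is all the straight-line homotopy and your induction need. Note that the disjointness from $\gamma_i$ comes from the parallel copy: a push that agrees with $\gamma_i$ away from $D$ is not disjoint from it.

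Two smaller corrections. First, drop the re-application of Proposition~\ref{lemma:keylemma} to $\gamma_i$ before the push. It gives no control of $|\gamma_i\cap\gamma|$, so inside the induction it can increase the quantity you induct on. It is also unnecessary, because how $\gamma_i$ meets curves it already meets is irrelevant to simpliciality. Second, your claim that other curves meet $D$ only in arcs running from one edge to the other is false for curves meeting $\gamma_i$. Such curves can enter and leave through the $\gamma_i$-edge, even infinitely often. This is harmless, since they are not adjacent to $\gamma_i$.
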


% Let $[\alpha]$ be an isotopy class of simple closed curves in $\surf$. Define the single-isotopy-class fine curve complex associated to $[\alpha]$, denoted $\fine_\alpha(\surf),$ to be the subcomplex of $\fine(\surf)$ induced by all elements of $[\alpha].$ In this section, we prove that $\fine_\alpha(\surf)$ is contractible.

% \begin{theorem}\label{thm:singleisotopyclasscontractible}
%     Let $\surf$ be an orientable surface with $g\geq 1$ or $b\geq 4$. Let $\alpha$ be a curve in $\surf.$ Then $\fine_\alpha(\surf)$ is contractible. {\color{red}actually, our proof says that the fiber over any simplex downstairs is contractible!}
% \end{theorem}

Our main tools are Proposition~\ref{lemma:keylemma} and Whitehead's theorem. To this end, we first show that any map $S^{k}\to |\fine_A(\surf)|$ is homotopic to a map whose image is  contained in simplices spanned by curves with desirable intersection properties (Lemma~\ref{lemma:nicesphere}). Then, we show that any map $S^{k}\to |\fine_A(\surf)|$ can be further homotoped to a map whose image is contained in the star of a single vertex (Lemma~\ref{lemma:spheretostar}). 
\begin{lemma}\label{lemma:nicesphererepcurves}
    Let $\surf$ be an orientable surface and $\Gamma=\{v_1,\ldots,v_n\}$ be a collection of curves in $\surf.$ Then there exists a collection of curves $v_1',\ldots,v_n'$ such that, for all $i$ and $j,$
    \begin{enumerate}
        \item $v_i\cap v_i'=\emptyset$ or $v_i=v'_i$,
        \item if $v_i\cap v_j = \emptyset,$ then: 
        \begin{enumerate}
            \item $v_i'\cap v_j' = \emptyset$ and
            \item $v_i'\cap v_j = \emptyset$, and
        \end{enumerate}
        %and $v_i'\cap v_j=\emptyset$ if $i<j,$ and
        \item $|v'_i \cap v'_j|<\infty$ and all intersections are crossing.
    \end{enumerate}
\end{lemma}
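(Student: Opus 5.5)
We construct the curves $v_1',\ldots,v_n'$ one at a time, by induction on $i$, applying Proposition~\ref{lemma:keylemma} at each stage with a carefully chosen open neighborhood.

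\textbf{Inductive step.} Suppose $v_1',\ldots,v_{i-1}'$ have been built, and that for all $j,k<i$ they satisfy conditions (1)--(3) of Lemma~\ref{lemma:nicesphererepcurves}. To build $v_i'$, choose the open set
\[U_i = \surf\setminus\Big(\bigcup_{j:\,v_j\cap v_i=\emptyset} v_j \;\cup\; \bigcup_{j<i:\,v_j\cap v_i=\emptyset} v_j'\Big).\]
This is an open set containing $v_i$. It is open because it is the complement of finitely many compact curves. It contains $v_i$ because each removed curve is disjoint from $v_i$: the $v_j$ by the defining condition, and the $v_j'$ by the induction hypothesis (2b) applied with the roles of $i$ and $j$ swapped.

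Since $v_i$ is compact, there is a closed regular annular neighborhood $N_i\subset U_i$ of $v_i$. Shrinking $N_i$, we arrange that $N_i$ meets no $v_j$ or $v_j'$ with $v_j\cap v_i=\emptyset$. Now apply Proposition~\ref{lemma:keylemma} with $y=v_i$, the collection $\{v_1',\ldots,v_{i-1}'\}$, and the pre-selected neighborhood $\mathrm{Int}(N_i)$. This produces a curve $\gamma\subset \mathrm{Int}(N_i)$ that meets each earlier $v_j'$ finitely often and only at crossings. Set $v_i'=\gamma$.

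\textbf{Checking the conditions.}
\begin{itemize}
\item Since $\gamma$ lies in a thin annulus around $v_i$, it is isotopic to $v_i$ (it is the core curve of $N_i$, being essential and non-peripheral in $\surf$).
\item Conditions (2a) and (2b) for pairs involving $i$ hold because $N_i$ avoids the relevant curves.
\item Condition (3) holds for all $j<i$ by construction.
\end{itemize}

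\textbf{Main obstacle: condition (1).} We need $v_i'$ to be either disjoint from $v_i$ or equal to it. The Proposition only places $\gamma$ in a neighborhood of $v_i$, so $\gamma$ may cross $v_i$ infinitely often. The fix is to run the Proposition inside one complementary component of $v_i$ within $N_i$. Concretely, $N_i\setminus v_i$ consists of two half-open annuli; pick one, say $N_i^+$, and let $y^+$ be its core curve, which is isotopic to $v_i$. Apply Proposition~\ref{lemma:keylemma} with $y=y^+$ and the open set $\mathrm{Int}(N_i^+)$. The resulting curve $\gamma$ is then disjoint from $v_i$, so (1) holds.

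Two details need checking:
\begin{itemize}
\item The curve $\gamma$ produced in $\mathrm{Int}(N_i^+)$ must be essential in that annulus, so that it is isotopic to $v_i$. The proof of the Proposition does build a core curve of the annulus, so this holds.
\item Condition (3) is only required among the primed curves, and that is exactly what the Proposition delivers.
\end{itemize}

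The subtle point is that the Proposition's conclusion concerns only intersections with the listed curves $\gamma_1,\ldots,\gamma_n$. This is why both $v_i$ and the curves disjoint from $v_i$ are handled through the choice of neighborhood rather than by adding them to the list.
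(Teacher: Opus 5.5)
Your proposal is correct and matches the paper's proof. Both build the $v_i'$ inductively: choose a regular neighborhood of $v_i$ that misses every original or already-primed curve disjoint from $v_i$, then apply Proposition~\ref{lemma:keylemma} to a parallel copy of $v_i$ in (one side of) that neighborhood minus $v_i$, with the earlier $v_j'$ as the collection.

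The remaining differences are minor. The paper simply sets $v_1'=v_1$, and you make explicit that $v_i'$ is isotopic to $v_i$, which Lemma~\ref{lemma:nicesphere} relies on. You should, however, state the inductive hypothesis as including $v_j'\cap v_k=\emptyset$ for all $j<i$ and all $k$ with $v_j\cap v_k=\emptyset$. That, not the condition restricted to $j,k<i$, is what you invoke to see $v_i\subset U_i$, and your construction does provide it.
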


\begin{proof}
    Let $v'_1=v_1$ and let $\Gamma_1=\Gamma=\Gamma\cup \{v_1'\}.$ Consider $v_2.$ 
    We may pick an open regular neighborhood $T$ of $v_{2}$ such that if $v_j$ does not intersect $v_{2}$, then $v_j$ also does not intersect $T$. Pick a curve $y$ in $T\setminus v_{2}$. 
    By Proposition~\ref{lemma:keylemma}, 
    %Then, by Proposition~\ref{lemma:keylemma} and by choosing an annulus disjoint from and arbitrarily close to $v_2$ for the construction, 
    there exists a curve $v'_2$ contained in the neighborhood $T\setminus v_{2}$ of $y$ such that:
    \begin{enumerate}
        \item $v'_2\cap v_2 = \emptyset,$
        \item if $v_j \cap v_2 = \emptyset,$ then
         $v_j\cap v'_2=\emptyset$ and if $v'_j$ has already been defined, $v'_j\cap  v'_2=\emptyset$, and
        %\begin{enumerate}
        %    \item if $i<2,$ $v_i'\cap v_2' = \emptyset$ and
        %    \item if $i>2,$ $v_2'\cap v_i = \emptyset$,
        %\end{enumerate}
        \item for $i < 2$, we have $|v_2' \cap v_i'| < \infty$ and all intersections of $v_2'$ and $v_i'$ are crossing.
    \end{enumerate}
    Condition $(1)$ follows from the fact that $v'_2$ is in $T\setminus v_{2}$, condition $(2)$ follows from our choice of $T$, and condition $(3)$ follows from Proposition~\ref{lemma:keylemma}.
    %disjoint from $v_2$ such that: 1) $|v'_2\cap v'_i|<\infty$ for $i<2$, 2) $|v'_2\cap v_i|<\infty$ for $i>2,$ 3) $v'_2\cap v'_i=\emptyset$ for $i<2$ if $v_2\cap v_i=\emptyset,$ and 4) $v'_2\cap v_i=\emptyset$ for $i>2$ if $v_2\cap v_i=\emptyset.$ 

    We now consider the set of curves $\Gamma_2 = \{v_1,v_2,\ldots,v_n\}\cup \{ v_1',v_2'\}=\Gamma_1\cup \{v_2'\}$ and perform the above algorithm once again, this time constructing $v'_3$ using $v_3$ and letting $\Gamma_3=\Gamma_2\cup \{v_3'\}.$

    We continue similarly, constructing $v'_i$ using $v_i\in\Gamma_{i-1}$ and letting $\Gamma_i=\Gamma_{i-1}\cup\{v'_i\}.$

    In this manner, we have constructed a collection of curves $\Gamma'=\{v_1',\ldots,v_n'\}$ that satisfy the conditions in the lemma statement.
    \end{proof}

We have the following consequence of Lemma~\ref{lemma:nicesphererepcurves}.

\begin{lemma}\label{lemma:nicesphere}
    Any map $\phi\colon S^k\to |\fine_A(\surf)|$ is homotopic to a simplicial map $\phi'\colon S^{k}\to |\fine_A(\surf)|$ whose image lies in the full subcomplex spanned  by a set of curves $c_1,\ldots,c_n$ such that $|c_i\cap c_j|<\infty$ for all $i,j$ and all intersections between pairs of curves are crossing.
    %In addition, $\phi'$ is simplicial with respect to some combinatorial triangulation of $S^{k}$.
    %The image of any continuous map $\phi:S^k\to \fine_A(\surf)$ is homotopic to a sphere whose image lies in simplices spanned by a set of curves $c_1,\ldots,c_n$ such that $|c_i\cap c_j|<\infty$ for all $i,j$ and all intersections between pairs of curves are crossing.
\end{lemma}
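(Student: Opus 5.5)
By simplicial approximation we may take $\phi$ to be a simplicial map from a combinatorial $k$-sphere $\mathbb{S}^k$. Its image is a finite subcomplex, spanned by the finitely many curves $v_1,\ldots,v_n$ that are images of vertices of $\mathbb{S}^k$. We apply Lemma~\ref{lemma:nicesphererepcurves} to $\Gamma=\{v_1,\ldots,v_n\}$ and obtain curves $v_1',\ldots,v_n'$. Then we define $\phi'$ on vertices by $x\mapsto v_i'$ whenever $\phi(x)=v_i$, and extend simplicially. We take $c_i=v_i'$. Condition (3) of the lemma then gives exactly the required finiteness and crossing properties.

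First, $\phi'$ is a well-defined simplicial map into $\fine(\surf)$. If $\{v_{i_0},\ldots,v_{i_q}\}$ is a simplex, these curves are pairwise disjoint, so by (2)(a) the curves $v'_{i_0},\ldots,v'_{i_q}$ are pairwise disjoint. They are also distinct: $v'_i=v'_j$ with $i\ne j$ and $v_i\cap v_j=\emptyset$ would contradict (2)(b). Hence their span is a simplex of the same dimension.

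Second, $\phi'$ lands in $\fine_A(\surf)$: each $v_i'$ must be isotopic to $v_i$. In the construction of Lemma~\ref{lemma:nicesphererepcurves}, $v_i'$ lies in $T\setminus v_i$, where $T$ is an annular regular neighborhood of $v_i$. Since $v_i'$ is essential, it is isotopic to the core $v_i$. This is the one point that needs care. An essential simple closed curve in an open annulus is isotopic to the core, but one has to check that the curve given by Proposition~\ref{lemma:keylemma} near $y$ is not, say, null-homotopic. It is essential by convention, so inside $T\setminus v_i$, which is a disjoint union of two annuli, it is isotopic to the core of one of them, and hence to $v_i$. Note that the image of $\phi'$ then lies in the full subcomplex spanned by $c_1,\ldots,c_n$, since every vertex of the image is some $c_i$.

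Third, $\phi\simeq\phi'$. We show that for every simplex $\tau$ of $\mathbb{S}^k$, the vertices $\phi(\tau)\cup\phi'(\tau)$ span a simplex of $\fine_A(\surf)$. Given $v_i,v_j\in\phi(\tau)$, we have $v_i\cap v_j=\emptyset$, so (2)(a),(b) give $v_i'\cap v_j'=\emptyset$ and $v_i'\cap v_j=\emptyset$; symmetrically $v_j'\cap v_i=\emptyset$. Condition (1) gives $v_i\cap v_i'=\emptyset$ (or equality). So all of these curves are pairwise disjoint or equal, and they span a simplex. Thus $\phi$ and $\phi'$ are contiguous simplicial maps, and the straight-line homotopy $t\phi+(1-t)\phi'$ in barycentric coordinates stays inside $|\fine_A(\surf)|$. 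This gives the homotopy. The main obstacle is the isotopy-class check in the second step; the rest is bookkeeping from the conditions of Lemma~\ref{lemma:nicesphererepcurves}.
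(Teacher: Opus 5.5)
Your proposal is correct and follows the paper's proof. You apply Lemma~\ref{lemma:nicesphererepcurves} to the vertex curves, send each vertex to its replacement, and use the straight-line homotopy; this is justified because $\phi(\tau)\cup\phi'(\tau)$ spans a simplex, which is exactly the paper's observation that $\{b_{v_0},c_{v_0},\ldots,b_{v_p},c_{v_p}\}$ is a simplex.

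Your explicit check that each $v_i'$ is isotopic to $v_i$ is worth keeping. It works because $v_i'$ is an essential curve lying in one of the two annuli of $T\setminus v_i$, and it explains why $\phi'$ and the homotopy stay in $\fine_A(\surf)$, a point the paper leaves implicit. The one slip is minor: distinctness of the $v_i'$ follows from (2)(a), not (2)(b), and it is not needed for simpliciality anyway.
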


\begin{proof}
    The proof idea is to flow $\phi(S^k)$ into simplices spanned by the curves $c_i$.
    %%The simplicial approximation theorem implies that any continuous map $\phi: S^{k}\to |\fine_A(\surf)|$ is homotopic to a map $\phi'\colon |S^k|\to |\fine_A(\surf)|$ that is simplicial with respect to some combinatorial triangulation $|S^k|$ of $S^k$. As a result, we may assume that $S^{k}$ comes equipped with some combinatorial triangulation and that $\phi: S^k\to |\fine_A(\surf)|$ is simplicial with respect to this combinatorial triangulation of $S^k$.
    Recall that, without loss of generality, we can assume $\phi\colon S^{k}\to |\fine_A(\surf)|$ is induced by a simplicial map $\mathbb{S}^{k}\to\fine_A(\surf) $.
    Since $S^k$ is compact and $\phi$ is continuous, the image of $\phi$ is contained in the interior of simplices spanned by a finite number of vertices $b_1,\ldots, b_n$. 
     By Lemma~\ref{lemma:nicesphererepcurves}, there are curves $c_1\ldots,c_n$ such that
     for all $i$ and $j$, either $c_{i}\cap b_{i}=\emptyset$ or $c_{i}=b_{i}$; if $b_{i}\cap b_{j}=\emptyset$ then $c_{i}\cap c_{j}=\emptyset$ and $c_i\cap b_j=\emptyset$; and $|c_{i}\cap c_{j}|<\infty$ and all intersections are crossing. For each vertex $v\in (\mathbb{S}^{k})^{(0)}$, let $b_{v}$ denote $\phi(v)$ and let $c_{v}$ be the corresponding curve in $c_{1},\ldots, c_{n}$ (i.e. if $\phi(v)=b_{i}$, then $c_{v}\colonequals c_{i}$). The map $\phi'\colon \mathbb{S}^{k}\to \fine_A(\surf)$ that sends $v$ to $c_{v}$ is a simplicial map since $\phi\colon \mathbb{S}^k\to \fine_A(\surf)$ is a simplicial map. 
     
     We now show that $\phi\colon |\mathbb{S}^{k}|\to |\fine_A(\surf)|$ is homotopic to the simplicial map $\phi'\colon |\mathbb{S}^{k}|\to |\fine_A(\surf)|$. Consider the map 
    \begin{align*}
        H \colon |\mathbb{S}^{k}| \times I &\to |\fine_A(\surf)|\\
         %\big(\sum_{v\in (\mathbb{S}^{k})^{0}} t_{v} v, s\big)&\mapsto \sum_{v\in (\mathbb{S}^{k})^{0}} (1-s)t_{v} b_{v}+ st_{v} c_{v}\\
         \left(\sum_{v\in (\mathbb{S}^{k})^{0}} t_{v} v, s\right)&\mapsto (1-s)\sum_{v\in (\mathbb{S}^{k})^{0}} t_{v} b_{v}+ s\sum_{v\in (\mathbb{S}^{k})^{0}} t_{v} c_{v}.
    \end{align*}
    The formal sum  %$\sum_{v\in (\mathbb{S}^{k})^{0}} (1-s)t_{v} b_{v} + st_{v} c_{v}$ 
    $(1-s)\sum_{v\in (\mathbb{S}^{k})^{0}} t_{v} b_{v}+ s\sum_{v\in (\mathbb{S}^{k})^{0}} t_{v} c_{v}$ is a point in $|\fine_A(\surf)|$ and the map $H$ is continuous because if $\{b_{v_{0}},\ldots, b_{v_{p}}\}$ is a simplex in $\fine_A(\surf)$, then $\{b_{v_{0}}, c_{v_{0}},\ldots, b_{v_{p}}, c_{v_{p}}\}$ is also a simplex.
    %in  does not intersect $b_{w}$, then $c_{v}$ does not intersect $c_{w}$.
    Since $H(-, 0)=\phi$ and the map $H(-, 1)=\phi'$  has its image contained in the full subcomplex spanned by $c_{1},\ldots, c_{n}$, the result follows.  
\end{proof}

%We now recall the definition of a bigon in a surface because we will use them to show that any map $\phi\colon S^{k}\to|\fine_A(\surf)|$ is homotopic to a map whose image is contained in the star of a vertex. 
Now that we have that a map $|\mathbb{S}^k|\to |\fine_A(\surf)|$ can be homotoped to one whose image is spanned by curves that pairwise intersect finitely many times, we are ready to remove the remaining intersections.
A \emph{bigon} in a surface $S$ is a disk bounded by exactly two arcs, with each arc belonging to a different curve. For more details on bigons, see Section 1.2.4 of \textcite{primer}.
%begin{definition}
%    A  \emph{bigon} in a surface $S$ is a disk bounded by exactly two arcs, with each arc belonging to a different curve.
%\end{definition}
%For more details on bigons, see Section 1.2.4 of \textcite{primer}.

If two curves $c_{1}$ and $c_{2}$ in a surface $S$ have the property that they intersect a finite number of times with only crossing intersections and their isotopy classes $[c_{1}]$ and $ [c_{2}]$ form an edge in $\cc(S)$, then $c_{1}$ is homotopic to a curve $c_{1}'$ disjoint from $c_{2}$. Furthermore, $c_{1}'$ can be constructed from $c_{1}$ by removing all bigons between $c_{1}$ and $c_{2}$ (for more details, see the first proof of Proposition 1.7 in \textcite{primer}). This will be the main idea behind the proof of the following lemma.

\begin{lemma}\label{lemma:spheretostar}
    Let $\phi\colon S^k\to |\fine_A(\surf)|$ be a map. Then $\phi$ is homotopic to a simplicial map $\phi'\colon S^k\to |\fine_A(\surf)|$ whose image is in the star of a vertex.
    %Then, $\phi(S^k)$ can be homotoped to be contained in the star of one vertex.
\end{lemma}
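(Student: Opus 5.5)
First apply Lemma~\ref{lemma:nicesphere}. This homotopes $\phi$ to a simplicial map $\phi_1\colon \mathbb{S}^k\to\fine_A(\surf)$. Its image lies in the full subcomplex spanned by finitely many curves $c_1,\ldots,c_n$, which pairwise meet in finitely many points, all crossing. Every $c_i$ represents a vertex of the simplex $A$, so any two of the classes $[c_i],[c_j]$ are equal or span an edge of $\cc(\surf)$, and hence have geometric intersection number zero. The plan is to find one curve $d$ such that, for each $i$, $d$ is disjoint from $c_i$ and $[d]$ lies in $A$. Once $d$ exists, the full subcomplex on $c_1,\ldots,c_n$ lies in $\link(d)$. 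The straight-line homotopy $H(x,s)=(1-s)\phi_1(x)+s\,d$ is then well defined: for any simplex $\sigma$ in the image, $\sigma\cup\{d\}$ is a simplex of $\fine_A(\surf)$. It ends at the cone point, so $\phi_1$ is homotopic into $\Star(d)$. The endpoint is not simplicial as stated, so I would instead stop at a simplicial map into $\Star(d)$, for instance $\phi_1$ itself. Indeed $\phi_1$ already lands in $\Star(d)$, since each of its simplices together with $d$ spans a simplex.

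So the work is producing $d$. I would take $d$ to be a curve isotopic to $c_1$ and disjoint from all $c_i$. Since $[c_1]$ is a vertex of $A$, such a $d$ lies in $\fine_A(\surf)$. Construct it by induction using the bigon criterion. Choose a hyperbolic metric, or any fixed smooth structure, and look at $c_1$ against the finite union $\Gamma=c_1\cup\cdots\cup c_n$. Since all intersections are finite and crossing, $\Gamma$ is a finite graph (after tame adjustment) cutting $\surf$ into finitely many regions. Push $c_1$ off itself to a parallel copy $c_1^+$ inside a thin annulus about $c_1$. Choose the annulus thin enough that $c_1^+$ meets each $c_i$ only in transverse arcs crossing the annulus, one near each point of $c_1\cap c_i$; then $c_1^+\cap c_1=\emptyset$. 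Now, for each $j$ in turn, $c_1^+$ and $c_j$ have finitely many crossing intersections and zero geometric intersection number. By the bigon criterion (Primer, Proposition~1.7 and its first proof) there is an innermost bigon between $c_1^+$ and $c_j$. Isotope $c_1^+$ across it, and slightly past $c_j$, to remove two intersection points with $c_j$.

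The main obstacle is ensuring that removing a bigon with $c_j$ neither creates new intersections with the other $c_i$ nor reintroduces intersections with $c_1$. My first attempt is to do the bigon surgery on the side of the arrangement and choose a bigon innermost with respect to all of $\Gamma$. Then any $c_i$ entering the bigon $B$ does so in arcs that run from the $c_j$-side to the $c_1^+$-side, because an arc entering and leaving through the same side would cut off a smaller bigon. Pushing $c_1^+$ across $B$ then replaces its intersections with those arcs by the corresponding intersections on the far side. So the total intersection count with the $c_i$, $i\ne j$, does not increase, while the count with $c_j$ drops by two. Since $c_1^+$ starts disjoint from $c_1$ and $c_1$ cannot end inside a bigon, $c_1$ behaves like the other curves here. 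The complexity $\sum_i|c_1^+\cap c_i|$ is a finite integer and strictly decreases, so the process terminates. When it does, no bigons remain and all geometric intersection numbers are zero, so $c_1^+$ is disjoint from every $c_i$. Setting $d=c_1^+$ finishes the proof.
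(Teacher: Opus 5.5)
There is a genuine gap. Your reduction is fine: if some curve $d$ with $[d]\in A$ were disjoint from every $c_i$, then $\phi_1$ would already land in $\Star(d)$. But such a $d$ need not exist, so no bigon-removal scheme can produce it.

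\textbf{Counterexample.} Take $g\ge 1$, $\alpha$ nonseparating, and $A=\{[\alpha]\}$.
\begin{enumerate}
\item Let $c_1$ represent $\alpha$, and let $c_2^0$ be a parallel copy, cobounding a thin annulus $U$ with $c_1$.
\item Let $c_2$ be obtained from $c_2^0$ by a finger move along an arc $p$. The arc starts on $c_2^0$, runs through $\surf\setminus U$, reaches $c_1$ from outside $U$, and stops just after crossing into $U$. So $c_2$ is isotopic to $\alpha$ and meets $c_1$ in two crossings.
\item Let $c_3$ be the core of $U$ with one small dip across $c_1$ and one small bump across $c_2^0$, both away from $p$.
\end{enumerate}
The region of $\surf\setminus(c_1\cup c_2)$ containing the core of $U$ is $U$ plus the inside of the finger, minus the tiny tip. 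The curve $c_3$ meets this region in two arcs spanning $U$, so it cuts the region into disks. Every other complementary region of $c_1\cup c_2\cup c_3$ is either a small disk (the tip, the dip, the bump) or lies in the complement of $U\cup N(p)$. That complement misses a curve $\beta\subset U\cup N(p)$ with $i(\alpha,\beta)=1$. Hence no curve isotopic to $\alpha$ is disjoint from $c_1\cup c_2\cup c_3$.

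This configuration can occur. The hexagon $c_1,e,c_2,f,c_3,g$ is a loop in $\fine_A(\surf)$ for suitable representatives $e,f,g$ of $\alpha$, and all its pairwise intersections are finite and crossing. Nothing in Lemma~\ref{lemma:nicesphere} rules it out.

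\textbf{Where your argument breaks.} The failing step is the claim that other curves cross an innermost bigon $B$ only from side to side. An arc of $c_i$ with both ends on the $c_j$-side of $B$ cuts off a $c_i$--$c_j$ bigon, which involves no arc of $c_1^+$. Innermostness among bigons of $c_1^+$ does not exclude this. If you instead require that $B$ contain no bigon of any pair, such a $B$ need not exist. Pushing $c_1^+$ across such a $B$ creates two new intersections with $c_i$, so $\sum_i|c_1^+\cap c_i|$ need not drop. This already happens with $c_i=c_1$: an innermost $c_1$--$c_j$ bigon on the side of $c_1$ away from $c_1^+$ lies inside a $c_1^+$--$c_j$ bigon. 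So $c_1$ does not behave like the other curves in the way you need.

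\textbf{How the paper avoids this.} It does not look for a new hub curve. It uses $c_n$ itself as the cone vertex and modifies the other vertices. Take a bigon between $c_n$ and, say, $c_1$ that is innermost among all bigons with $c_n$. No curve disjoint from $c_1$ can enter it. So a copy $c_1'$ of $c_1$ pushed across it is disjoint from $c_1$ and from everything $c_1$ was disjoint from. That is exactly what the straight-line homotopy replacing $c_1$ by $c_1'$ requires. The complexity is only the total intersection with $c_n$, so new intersections of $c_1'$ with curves that already cross $c_1$ do no harm.
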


\begin{proof}
    By Lemma~\ref{lemma:nicesphere}, we can assume that all vertices that span $\phi(S^k)$ have only crossing intersections (and a finite number of them); call these curves $c_1,\ldots,c_n$. We will show that $\phi$ is homotopic to a simplicial map whose image  is in $\Star(c_{n})$.
    %We will homotope $\phi(S^k)$ to $\Star(c_n).$

    We describe an algorithm that decreases total intersection number with $c_k$ by removing bigons one at a time. Take an innermost bigon of all the curves $c_1,\ldots,c_{n-1}$ with $c_n.$ There may be more than one, and some bigons may not be comparable, but that does not matter. Without loss of generality, this bigon is formed by $c_1$ and $c_n.$ Make a copy of $c_1$ to the side of $c_1$ that is inside the bigon such that the copy has the same intersection patterns with all the other curves as $c_1;$ call this curve $c_1'.$ This is possible since all intersections between curves are crossing. We note that since the bigon we are considering is innermost, any curve $c_i$ disjoint from $c_1$ is also disjoint from the bigon. (A curve that intersects the bigon but not $c_1$ would form a bigon with $c_n$ contained in the original bigon, contradicting the original bigon being innermost.) Now, surger $c_1'$ along $c_n$ so that it no longer forms the bigon in question. A visual of this can be seen in Figure~\ref{fig:contractibilityalgorithm}. By using a homotopy similar to the one in the proof of Lemma~\ref{lemma:nicesphere}, we have that $\phi\colon S^k\to |\fine_A(\surf)|$ is homotopic to a simplicial map $\phi'\colon S^k\to |\fine_A(\surf)|$ whose image is contained in the full subcomplex spanned by $c_{1}', c_{2},\ldots, c_{n}$.

    \begin{figure}[h]
    \begin{center}
    \begin{tikzpicture}
    \node[anchor = south west, inner sep = 0] at (0,0) {\includegraphics[width=0.3\textwidth]{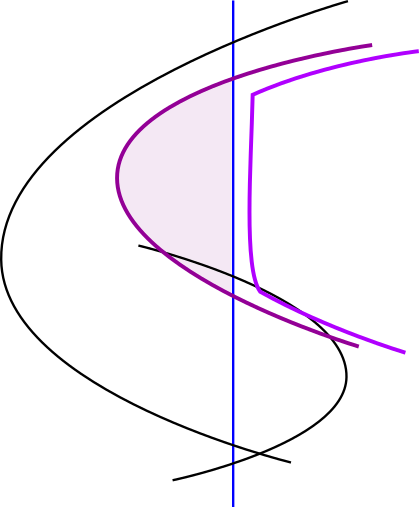}};
    %\draw[help lines] (0,0) grid (5,5);
    \node at (2.35,5.4){{\color{blue}$c_n$}};
    \node at (3.5,4.5){{\color{purp2}$c_1'$}};
    \node at (1.1,3.5){{\color{purp1}$c_1$}};
    \node at (3.75,0.9){$c_2$};

    \end{tikzpicture}
    \caption{Our algorithm involves finding an innermost bigon with $c_n$, such as the pictured shaded one formed by $c_1$, and homotoping across it. An alternative innermost bigon is the one created by $c_2.$}\label{fig:contractibilityalgorithm}
    \end{center}
    \end{figure}
    
    %We notice that if $c_1$ is a vertex of a simplex $S$ intersecting the image of $\phi(S^k)$, then $S \cup \{c_1'\}$ also spans a simplex in $\mathcal{C}^\dagger_{A}(S_{g,b})$. By considering any point in a simplex containing $c_1$ as actually being contained in the face of a higher dimensional simplex, we can linearly transform any such point to the face with $c_1'$ instead of $c_1$ (as in the proof of Lemma~\ref{lemma:nicesphere}).

    We now notice the following: total intersection number with $c_n$ has decreased since $|c_1'\cap c_n|=|c_1\cap c_n|-2$, the total intersection number with $c_n$ is finite, and the minimal intersection number between all the vertices and $c_n$ is 0 by the definition of $A$ (which can necessarily be achieved by removing bigons). 

    By repeatedly applying the above bigon removal, we will have removed all intersections with $c_n.$ It follows that $c_{1}',\ldots,c_{n-1}' \in \Star(c_n)$ (where the $c_i'$ may have had to go through multiple rounds of surgery and replacement).
    
    %Therefore, since $c_{1}'$ intersects with $c_{2},\ldots, c_{n-1}$ finitely many times and all intersections are crossing, after repeated application of this algorithm, we will have removed all intersections with $c_n,$ so $c_{1}',\ldots,c_{n-1}' \in \Star(c_n)$ (where the $c_i'$ may have had to go through multiple rounds of surgery and replacement). 
    Therefore, $\phi\colon S^k\to |\fine_A(\surf)|$ is homotopic to a simplicial map whose image  is in $\Star(c_{n})$.
    %Therefore, $\phi(S^k)$ can be homotoped to the star of a vertex.
\end{proof}

We are now ready to prove Theorem~\ref{thm:singleisotopyclasscontractible}.

\begin{proof}[Proof of Theorem~\ref{thm:singleisotopyclasscontractible}]
    % We will prove the theorem using Whitehead's theorem. Let $\phi\colon S^k\to |\fine_A(\surf)|$ be a continuous map. By Lemma~\ref{lemma:spheretostar}, $\phi$ is homotopic to a simplicial map $\phi'\colon S^k\to |\fine_A(\surf)|$ whose image is in the star of a vertex. Since the star of a simplex in a simplicial complex is contractible,  $\phi'$ is nullhomotopic. 

    % {\color{blue}Since $\pi_k(\fine_A(\surf))=0$ for all $k,$ we have that the constant map $g:\fine_A(\surf)\to *$ induces an isomorphism on all fundamental groups. Whitehead's theorem then implies that $g$ is a homotopy equivalence. We conclude that $\fine_A(\surf)$ is contractible. 
    % }
    %{\color{blue} Whitehead's theorem now implies that} $\fine_A(\surf)$ is weakly contractible. {\color{blue} Since $\fine_A(\surf)$ is a CW complex, we conclude that $\fine_A(\surf)$ is contractible.} %By Whitehead's theorem, since $\fine_A(\surf)$ is weakly contractible, it is contractible.
    %By Lemma~\ref{lemma:spheretostar}, $\phi(S^k)$ can be homotoped to be in the star of a vertex. Since $\fine_A(\surf)$ is a flag complex, the star of a vertex is contractible, meaning $\phi(S^k)$ is nullhomotopic.
%
    %By Whitehead's theorem, since $\fine_A(\surf)$ is a simplicial complex, we have that $\fine_A(\surf)$ is contractible, as desired.
        We will prove the theorem using Whitehead's theorem. Let $\phi\colon S^k\to |\fine_A(\surf)|$ be a continuous map. By Lemmas~\ref{lemma:nicesphere} and~\ref{lemma:spheretostar}, $\phi$ is homotopic to a simplicial map $\phi'\colon S^k\to |\fine_A(\surf)|$ whose image is in the star of a vertex. Since the star of a simplex in a simplicial complex is contractible,  $\phi'$ is nullhomotopic, and so $\fine_A(\surf)$ is weakly contractible. By Whitehead's theorem, since $\fine_A(\surf)$ is weakly contractible, it is contractible.
\end{proof}

\subsection{Proof of Theorem~\ref{thm:cchom}}\label{subsec:proofofcchom}

We now prove that the collapsing map $f:\fine(\surf)\to \mathcal{C}(\surf)$ is a homotopy equivalence.

\begin{proof}[Proof of Theorem~\ref{thm:cchom}]
   We note that the map $f:\fine(\surf)\to \cc(\surf)$ that sends curves to their isotopy classes is a simplicial map. Let $\sigma$ be a simplex in $\cc(\surf).$ Then $f^{-1}(\sigma)$ is contractible by Theorem~\ref{thm:singleisotopyclasscontractible}.
   Therefore, by Proposition~\ref{prop: contractible fib}
   %Applying work of Hatcher--Vogtmann \cite[Corollary 2.7]{HV}, it follows that
   $f$ is a homotopy equivalence.
\end{proof}

\section{Homotopy type of the fine arc complex}\label{sec:fac}

In this section, we prove that the fine arc complex is contractible (Theorem~\ref{thm:achom}). Although this theorem can be proved using the ideas in the proof of Theorem~\ref{thm:cchom}, we provide an alternate proof inspired by Hatcher flow in \textcite{Hatcher}. 

To do this, we use Whitehead's theorem. First we show that any map $S^{k}\to|\finearc(\surf)|$ is homotopic to a simplicial map whose image is contained in the star of a vertex.
%the image of any sphere of finite dimension under a continuous map can be homotoped so its image is contained in the star of a single vertex.
We follow Hatcher's proof idea in \cite{Hatcher} but make key modifications.

\begin{proof}[Proof of Theorem~\ref{thm:achom}]
    We will show that any map $\phi\colon S^{k}\to|\finearc(\surf)|$ is nullhomotopic by using Hatcher flow. Then we will apply Whitehead's theorem to conclude that $\finearc(\surf)$ is contractible.
    %We show that each sphere in $\finearc(\surf)$ is contractible using Hatcher flow and then apply Whitehead's theorem to conclude that $\finearc(\surf)$ is contractible.

    %Let each point in the arc complex be defined by its barycentric coordinates; that is, each point is a linear combination of a finite number of vertices $\sum_{i=0}^Nt_i\gamma_i$ with $t_i\geq 0$ for all $i$ and $\sum t_i =1.$ Consider a continuous map of a sphere $f:S^k\to\finearc(\surf)$. Since the image of $f$ is compact, standard properties of the CW topology imply that there is a finite collection of vertices, $\Gamma=\{\gamma_1,\ldots,\gamma_n\}$, such that each point in $S^k$ is mapped to some linear combination of $\gamma_1,\ldots,\gamma_n.$

    Since $S^{k}$ is compact, the image of $\phi$ is contained in the geometric realization of the full subcomplex of $\finearc(\surf)$ spanned by a finite set of arcs $\Gamma=\{\gamma_{1},\ldots, \gamma_{n}\}$.
     Let  $\beta\in \finearc(\surf)^{(0)}$ be an arc that intersects each $\gamma_i\in \Gamma$ at finitely many points such that 1) each intersection is crossing
     %2) $\beta \cap \mathcal{E}(\gamma_1,\ldots,\gamma_n) = \emptyset$,
     and 2) $\partial \beta\cap \gamma_i=\emptyset$ for all $i$. Such an arc is guaranteed to exist by Proposition \ref{lemma:keylemma}. 
    
     Recall that the star of $\beta$ consists of simplices $\{\alpha_{0},\ldots, \alpha_{p}\}$ in $\finearc(\surf)$ such that each $\alpha_{i}$ is disjoint from $\beta$ or equals $\beta$. 
     %that are collections of pairwise disjoint arcs that are also disjoint from $\beta.$ 
     The following exposition is directly inspired by that of Hatcher's proof in \cite{Hatcher}. Throughout, we point out the differences between our situation and that of Hatcher. 

    \p{Well-definedness in replacing arcs} During Hatcher flow, arcs are replaced with other arcs in such a way that intersection numbers with $\beta$ are reduced. We must take into account that, in our situation, we do not have the flexibility of isotoping arcs. We will therefore pre-determine how to replace arcs during Hatcher flow.

    We first orient $\beta.$ Since the number of intersections between $\beta$ and $\cup \gamma_i$ is finite, we may number the intersections in ascending order, beginning with the head of $\beta$ and proceeding toward the tail. We will now describe an algorithm for replacing arcs based on the intersections numbered above. 
    
    \pit{The first intersection} Say the first intersection is due to $\gamma_1,\ldots,\gamma_l$ (if $l\neq 1,$ we call this a \emph{multi-intersection}). Pick a neighborhood $N$ of $\beta$ between the intersection and the boundary component such that $N$ is disjoint from all other $\gamma_i$ and having the relevant portion of $\partial \surf$ and such that the arc(s) that meet at the first intersection point of $\beta$ are lie on the boundary of $N$, as in the left of Figure~\ref{fig:howtoarcs}. We then surger each $\gamma_1,\ldots,\gamma_l$ with $\partial N$, as on the right of Figure~\ref{fig:howtoarcs}. 

    \begin{figure}[h]
    \begin{center}
    \begin{tikzpicture}
    \node[anchor = south west, inner sep = 0] at (0,0) {\includegraphics[width=0.8\textwidth]{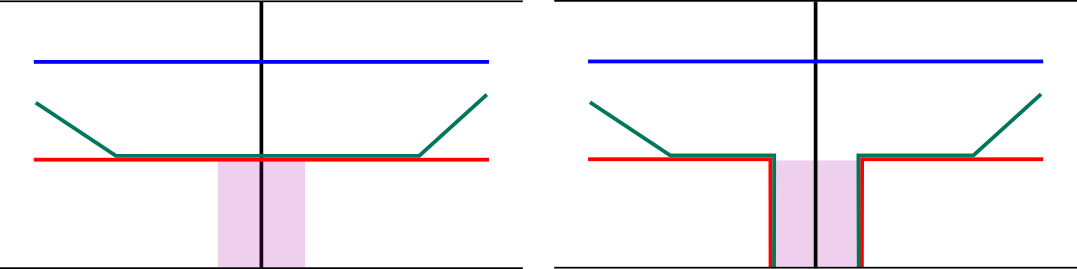}};
    %\draw[help lines] (0,0) grid (12,3);
    \node at (0.1,1.2) {$\gamma_1$};
    \node at (-0.2,1.8){$\gamma_2=\gamma_l$};
    \node at (0.1,2.3){$\gamma_3$};
    \node at (2.7,0.6){$N$};
    \node at (3.2,1.7){$\beta$};
    \node at (12,1.2){$\gamma'_1$};
    \node at (12,2){$\gamma'_2$};
    
    \end{tikzpicture}
    \caption{Left: $\beta$ appears vertically in black and there are several $\gamma_i$ crossing it. A neighborhood $N$ of the portion of $\beta$ between $\partial \surf$ and the first point of intersection is shaded in pink. Right: the first intersection is resolved via surgery with $\partial N.$}\label{fig:howtoarcs}
    \end{center}
    \end{figure}
    
    Thus, each arc $\gamma_1,\ldots,\gamma_l$ becomes two arcs after surgery. At least one has to be essential, since if they are both inessential, the case has to have been that the original $\gamma_i$ was itself inessential. We claim that for each $\gamma_i\in \gamma_1,\ldots, \gamma_l,$ there exists an essential choice for a surgered arc $\gamma'_i$ such that: 1) $\gamma_i\cap \gamma'_i=\emptyset,$ 2) if $\gamma_i\cap \gamma_j=\emptyset$, then $\gamma'_i\cap \gamma_j=\emptyset$ for $\gamma_j\in\Gamma$, 
    %3) $\gamma'_i\cap \beta \cap \gamma_j = \emptyset$ for all $\gamma_j\in \Gamma$, 
    3) $|\gamma'_i\cap \beta| \leq |\gamma_i\cap \beta|-1,$ and 4) $\gamma'_i$ is contained in a small neighborhood of $\partial N$ and $\gamma_i$.

    (1) and (2) can be guaranteed since there is an arc $\gamma'_i$ arbitrarily close to $\gamma_i$ that is also disjoint from $\gamma_i$ and any of the curves that $\gamma_i$ is disjoint from.  (3) uses that all intersections between $\gamma_i$ and $\beta$ are crossing.
    %For the remainder of the proof of the claim, we take $A$ to be a neighborhood of such a representative with $A$ disjoint from $\gamma_i$ and any $\gamma_j$ from which $\gamma_i$ is disjoint. (3) is accomplished inherently by the surgery and arises from the fact that all intersections between $\gamma_i$ and $\beta$ are crossing. 
    %(4) can be accomplished since all $\gamma_i$ are disjoint from $N$ after the surgery, and it is possible to isotope $\gamma_i$ in an arbitrarily small neighborhood of itself.
    (4) can be accomplished since it is possible to isotope $\gamma'_i$ in an arbitrarily small neighborhood of itself.

    We now replace the set $\Gamma$ with $\Gamma',$ in which each $\gamma_{i}$ in the set $\{\gamma_1,\ldots,\gamma_{l}\}$ is replaced with an arc $\gamma_{i}'$ by performing the above construction. We note that the total number of intersections with $\beta$ has been reduced. Moreover, the intersections between the curves in $\Gamma'$ and $\beta$ can once more be numbered. (We note that the numbering may be inconsistent with the prior enumeration based on which representative is chosen. However, all of the isotopies can be done in sufficiently small neighborhoods that the only differences in the numbering of the intersections is when a multi-intersection with $\beta$ splits up.)

    %Where relevant, we then replace $\gamma_i$ with $\gamma_i'$ .The remainder of the intersections with $\beta$ can again be numbered. We note that the numbering may be inconsistent with the prior enumeration based on which representative is chosen. However, all of the isotopies can be done in sufficiently small neighborhoods that the only differences in the numbering of the intersections is when a multi-intersection with $\beta$ splits up. We now replace the set $\Gamma$ with $\Gamma',$ in which each $\gamma_{i}$ in the set $\{\gamma_1,\ldots,\gamma_{l}\}$ is replaced with an arc $\gamma_{i}'$ by performing the above construction. We note that the total number of intersections with $\beta$ has been reduced.

    \pit{Past the first intersection} We now repeat the above with the new first intersection between $\Gamma'$ and $\beta.$ We continue to do this until no intersections remain between the surgered arcs and $\beta.$ This process terminates since the number of intersections strictly decreases with each iteration of the algorithm.
    
    The key point is that this algorithm is performed once a priori and the choices of surgered and isotoped arcs are fixed for the remainder of the proof. As a result, after sufficiently iterating this algorithm, we can construct a set of arcs $\Gamma''=\{\gamma_{1}'',\ldots, \gamma_{n}\}$ that satisfies: each $\gamma_{i}''$ is in the star of $\beta$; each $\gamma_i''$ was obtained via some surgery of $\gamma_i$ with $\beta;$ and if $\gamma_{i}\cap \gamma_{j}=\emptyset$, then $\gamma_{i}'' \cap \gamma_{j}'' =\emptyset$, for all $i, j=1,\ldots, n$. %As a result, after sufficiently iterating this algorithm, we can construct a set of arcs $\Gamma''=\{\gamma_{1}'',\ldots, \gamma_{n}\}$ that satisfies: 1) each $\gamma_{i}''$ is in the star of $\beta$, 2) $\gamma_{i} \cap \gamma_{i}'' =\emptyset$, and 3) if $\gamma_{i}\cap \gamma_{j}=\emptyset$, then $\gamma_{i}\cap \gamma_{j}'' =\emptyset$ and $\gamma_{i}'' \cap \gamma_{j}'' =\emptyset$, for all $i, j=1,\ldots, n$.

    \p{Performing Hatcher flow} We interpret a point $P$ in the image of $\phi$ as a weighted sum of the curves $\{\gamma_i\}_I$ by considering barycentric coordinates. We will define the flow of the image of $\phi$ by considering the image to lie in higher-dimensional simplices and flowing between faces.

    Consider the arcs arising from surgery with the arcs forming the first intersection, as in the Well-definedness in replacing arcs: The first intersection section. Let $P=\sum_{i=0}^m a_i\gamma_i \in \phi(S^k)$ be a point in a simplex spanned by $\gamma_1,\ldots,\gamma_m$ in terms of its barycentric coordinates. Suppose, without loss of generality, that $\gamma_1$ was one of the arcs forming the first intersection with $\beta.$ We then notice that the arcs $\gamma_1,\ldots,\gamma_m,\gamma'_1$ span a simplex as well, and we define a flow from the face containing $P$ to the face containing $\gamma'_1$ (but not $\gamma_1$) linearly: $H(P,t)=\sum_{i=2}^ma_i\gamma_i + (1-t)a_1\gamma_1 + ta'_1\gamma'_1$. We can extend this flow to all points $x\in\phi(S^k)$ by setting $x=\sum_{i=1}^na_i\gamma_i$ (using barycentric coordinates once more and potentially having many 0's as coefficients) and defining \[H(x,t)=\sum_{i=2}^na_i\gamma_i + (1-t)a_1\gamma_1 + ta'_1\gamma'_1.\] We note that any points not contained in the interior of simplices spanned (in part) by $\gamma_1$ are not moved.

    We then perform similar flows for the remainder of the arcs that are part of the first intersection with $\beta.$ (Further note that all of the flows corresponding to the first intersection could be performed simultaneously since no two of $\gamma_1,\ldots,\gamma_l$ can span a common simplex.)

    After performing the above flows, the image of $\phi$ is contained in simplices spanned by a new collection of arcs $\Gamma'=\gamma'_1,\ldots,\gamma'_l, \gamma_{l+1},\ldots,\gamma_n$, where the total intersection number of the arcs with $\beta$ has been reduced. We then repeatedly perform flows to reduce intersection number with $\beta$ until the image of $\phi$ is contained in the star of $\beta,$ as desired.

    % {\color{orange}ANOTHER ALTERNATIVE EXPOSITION FOR PERFORMAING HATCHER FLOW SECTION
    
    % \p{Performing Hatcher flow}
    % To construct Hatcher flow, we construct a homotopy $H\colon S^{k}\times I \to |\finearc(\surf)|$
    % by from $\phi$ to a map $\phi''$ whose image is in the full subcomplex spanned by vertices in $\Star(\beta)$ by using a homotopy similar to the one given in the proof of Lemma~\ref{lemma:nicesphere}. More specifically, without loss of generality we can assume that $\phi\colon S^{k}\to |\finearc(\surf)|$ is induced from a simplicial map $\phi\colon \mathbb{S}^{k}\to \finearc(\surf)$. For each vertex $v\in (\mathbb{S}^{k})^{(0)}$, let $\gamma_{v}= \phi(v)$. Let  $\phi''\colon \mathbb{S}^{k}\to \finearc(\surf)$ be the map sending a vertex $v\in (\mathbb{S}^{k})^{(0)}$ to $\gamma_{v}''$ (i.e. if $\gamma_{v}=\gamma_{1}$, then $\gamma_{v}'' = \gamma_{1}''$). Due to how $\Gamma''$ was constructed, $\phi''$ is a simplicial map. The map 
    % \begin{align*}
    %     H\colon |\mathbb{S}^{k}|\times I &\to |\finearc(\surf)| \\
    %     \big(\sum_{v\in (\mathbb{S}^{k})^{0}} t_{v} v, s\big)&\mapsto \sum_{v\in (\mathbb{S}^{k})^{0}} (1-s)t_{v} \gamma_{v}+ st_{v} \gamma_{v}''
    % \end{align*}
    % gives a homotopy from $\phi$ to $\phi''$. In particular, $\phi$ is homotopic to a map whose image is contained in $\Star(\beta)$. Therefore, $\finearc(\surf)$ is weakly contractible, and so by Whitehead's theorem, $\finearc(\surf)$ is contractible.}

    \medskip \noindent Now that we have that the image of every sphere under a continuous map is homotopic to one contained in the star of a single vertex and is therefore nullhomotopic, we invoke Whitehead's theorem to conclude that $\finearc(\surf)$ is contractible for the prescribed surfaces.
\end{proof}

 \pit{Distinctions from Hatcher's proof} Our difference in setting requires a few changes to Hatcher's argument in \cite{Hatcher}, which we briefly mention here for the convenience of experts.
 
 One distinction between our proof and that of Hatcher is that we have an uncountable number of choices whenever a surgery is performed. This is taken care of by the ``Well-definedness in replacing arcs'' section of the proof. 
    
Another distinction is that Hatcher asks for there to be no bigons between any pair of arcs. Hatcher needs this since his proof is up to isotopy, but we remove this requirement by ensuring that all intersections are crossing and that all arcs are still surgered to create at least one essential arc. 

     We also explicitly mention here that although there were simultaneous surgeries in the ``Well-definedness when replacing arcs'' section, such a phenomenon does not actually occur during the flow. %we may be surgering with two arcs simultaneously (if the arcs coincide in a neighborhood of an intersection with $\beta$), but this does not affect the proof.

    One of the clearest distinctions between our proof and that of Hatcher is that Hatcher prescribes a specific set of endpoints $V\subset \partial \surf$ for his arcs and asks for no two elements of $V$ to be in the same connected component of the boundary. Hatcher treats the points in $V$ as punctures that cannot be isotoped across, so an essential arc could bound a disk with a marked boundary in $\surf.$ If this is the case, then surgery via the flow argument could result in no essential arcs. However, such arcs are not permitted in our case, as we do not treat $\partial \surf$ as having marked points. As such, we impose no further restrictions on the arcs that comprise the vertices of the fine arc graph.

\printbibliography

\end{document}